\documentclass[12pt]{amsart}
\usepackage{mathptmx}%times roman fonts, gives nice script O for Cuntz algebras
\usepackage{tikz}
\usetikzlibrary{positioning,arrows}
\usepackage{nicefrac}
\usepackage{varioref}
\usepackage{cool} %improved sum, int, diff commands AND provides the MeijerG-function that we will need
\usepackage{microtype} %packs text more, is supposed to make things look nicer
\usepackage{cite} %sorts multiple cites in nicer order.
\usepackage{mathtools}
\usepackage{suffix}%used in defining a Meijeg G-function
\usepackage{amsthm,amsmath}
\usepackage{hyperref} %popular package to add clickable links. Should be listed last in list of packages because it retcons many commands
\usepackage[capitalise]{cleveref}
\newtheorem{theorem}{Theorem}[section]
\newtheorem{lemma}[theorem]{Lemma}
\newtheorem{proposition}[theorem]{Proposition}
\newtheorem{corollary}[theorem]{Corollary}
\newtheorem{example}[theorem]{Example}
\newtheorem{definition}[theorem]{Definition} %
 
\DeclareMathOperator{\erf}{erf}
\DeclareMathOperator{\erfi}{erfi}

\newcommand{\F}{\mathcal F} %Fourier transform
\renewcommand{\Finv}{\F\inv} %inverse Fourier transform

\newcommand{\Id}{\text{Id}}
\newcommand{\inv}{^{-1}}

\newcommand{\KK}{{\textsl{KK}}} %Kasparov's KK-theory

\newcommand{\real}{\mathbb{R}}
\newcommand{\R}{\real}

\newcommand{\intSinc}{\int \sinc{xs}\,dS(s)}
\newcommand{\sinc}[1]{\frac{\sin{(#1)}}{{#1}}}
\newcommand{\mstack}[2]
{^{\displaystyle \mathstrut{#1}}_{\displaystyle \mathstrut{#2}}}	% could also be \mathstrut{\displaystyle ...

\begin{document}
	
	\title{\sc Shannon Integrals and applications to Riemann-Hilbert problems}%do title of article in small caps like in MSc thesis?
	\author{Y. Abdolmaleki and D. Kucerovsky}
	\email{dkucerov@unb.ca, yavar.abdolmaleki@unb.ca}
	\address{Dept. of Math.\\University of New Brunswick at Fredericton\\Fredericton, NB\\Canada  E3B 5A3}
	\subjclass[2010]{Primary 42A82, secondary 60E10, 26A48 }
	\begin{abstract}We say that a real function can be represented by a Shannon integral if it can be written in the form $$\intSinc$$ for some finite measure $dS.$ We develop a theory of Shannon integral representations for  positive definite functions, and show that, under mild conditions, $f=f^{+}+f^{-}$ with $f^{\pm}$
being positive definite solutions of an additive Riemann--Hilbert problem on
the real line; a multiplicative factorization $|f|=\Phi^{+}\Phi^{-}$ is
obtained for the Thorin--Bernstein class.
	\end{abstract}\maketitle
	
	\subsection{Summary and introduction}\label{sect:Introduction}

We say that a real function admits a Shannon integral representation if it can be written in the form
\[
f(x)=\int_{0}^{\infty}\frac{\sin(xs)}{xs}\,dS(s)\footnote{In some cases, when it has been clear, we have used $ds$ instead of $dS(s)$ or $dt$ instead of $dT(t).$}
\]
for some finite Borel measure \(S\) on \([0,\infty)\). We develop a systematic theory of such representations for positive definite functions. Under mild assumptions on \(S\), we show that for an even continuous function \(f:\mathbb{R}\to\mathbb{C}\) the following conditions are equivalent:
\begin{enumerate}
\item \(f\) has a Shannon integral representation with respect to \(S\);
\item \(f(x)=\int_{0}^{1}G(xt)\,dt\), where \(G\) is the Fourier--Stieltjes transform of \(S\);
\item \(G\) satisfies the differential relation \(G(x)=x f'(x)+f(x)\).
\end{enumerate}
If, in addition, \(S\) is positive, these are further equivalent to:
\begin{enumerate}
\setcounter{enumi}{3}
\item \(f\) is positive definite on \(\mathbb{R}\);
\item \(f\) is absolutely monotone on the positive imaginary axis;
\item the Fourier transform \(\widehat{f}\) is real, nonnegative, and monotone decreasing on \((0,\infty)\).
\end{enumerate}
In this case the additive Riemann--Hilbert problem for \(f\) on \(\mathbb{R}\) admits positive definite solutions
\[
f_{\pm}(z)=\int_{0}^{\infty}\frac{e^{\pm i s z}-1}{s z}\,dS(s).
\] The precise statements and proofs for the above are given in \cref{lem:de.form,cor:InverseFourierStieltjes,lem:pd.is.necessary,th:when.is.it.positive,th:additive.RH}. Thus, the Shannon integral representation property is equivalent to being a smoothed Fourier--Stieltjes transform, and also to a differential condition. If also the function is positive, a number of striking additional properties come into play (namely, monotonicity, and positive-definite RH factorizations).
We illustrate the theory with examples of explicit Shannon representations and positive definite Riemann--Hilbert factorizations for certain classical positive definite functions, including examples related to the Bhatia--Jain family \(x\mapsto (1-|x|)/(1-|x|^{\alpha})\). As an application to operator theory, we derive integral commutator formulae and norm estimates of L\"owner type for functions of self-adjoint operators, with particular emphasis on the Weyl algebra.

	Our main result on Riemann-Hilbert problems (\cref{th:additive.RH}) states that under certain conditions, a  function $f$ on the real line can be written in the form $f=\Phi_{+}+ \Phi_{-}$ with $\Phi_{+}$ and $\Phi_{-}$ are positive definite if and only if $f$ is positive definite. The multiplicative case (\cref{th:mult.RH}) is similar, utilizing the Thorin--Bernstein function class. Our main result on representation as a Shannon integral is probably 	\cref{cor:factorization}, which finds some equivalent conditions, under some assumptions on the measure, for a function $f(x)$ to have a representation as $f(x):=\intSinc.$
	
	As an  application of our theory of integral representations, we consider applications to operator commutators, and we illustrate using the special case of the Weyl algebra.
	Recall that the unital $*$-algebra $W$ with hermitian generators $p$ and $q$ satisfying the canonical commutation relations $[p,q]=i\Id$ is called the Weyl algebra\cite{Schmudgen,rosenberg2004}.
	The Weyl algebra inherits the usual Hilbert functional calculus for unbounded self-adjoint operators, (\emph{i.e.} a Weyl calculus) and  we may ask how $[f(p),q]$ and $[p,q]$ are related.
	This is addressed in \cref{cor:commutator}.

	\section{Definitions and Lemmas}

	\begin{definition}
	A function $G$ is called positive definite if for any system of real numbers  $  t _ {1} \dots t _ {n} $,
	$$
	\sum _ {i, j = 1 } ^ { n }
	G ( t _ {i} - t _ {j} )
	\xi_ {i} \overline \xi_{j}  \geq   0,
	$$
	whatever the complex numbers  $  \xi _ {1} \dots \xi _ {n} $. This property is preserved under pointwise limits $G_n\to G.$
	\end{definition}\label{KernelRef}
	
	\begin{definition}
	
	A function is called \emph{absolutely monotone} at a point $x$ if its derivatives at that point all exist and are non-negative. If the function is  absolutely monotone at all points on the non-negative real line, one simply says that the function is absolutely monotone. 
	\end{definition}
	
	Bernstein showed that such a function extends analytically to the complex plane. 
	The following Lemmas develop a theory of Shannon integral representations for  positive definite functions. We have typically, and where it has been clarifying, written $dM$, and sometimes $dm$, for a measure coming from Bochner-Khintchine, and $dS$ for the measure appearing in a Shannon integral.

	A function on the complex plane is said to be of exponential order  if there exist constants $M$ and $\alpha$ such that $|f(re^{i\theta})|\le Me^{\alpha r}.$
	\begin{lemma}\label{lem:tm.is.pd}
Let $f$ be analytic and even, and define $g(x):=f(ix)$, so that $g$ is real on
the real line. Then:
\begin{enumerate}
  \item $f$ is positive definite on the real line;
  \item $g(x)e^{Bx}$ is absolutely monotone on the positive real line, for
        some $B>0$.
\end{enumerate}
If $f$ is of exponential order, then (i) implies (ii). The converse,
(ii) implies (i), holds in general.
\end{lemma}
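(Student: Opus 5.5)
The plan is to pass through Bochner's theorem in both directions and to use the Bernstein--Widder characterisation of absolutely monotone functions as Laplace transforms of positive measures.

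\emph{(i) $\Rightarrow$ (ii).} Suppose $f$ is positive definite, analytic, even and of exponential order. By Bochner--Khintchine there is a finite positive measure $dM$ with $f(x)=\int_{\R} e^{ixu}\,dM(u)$.

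The first key step is to show that $dM$ has compact support, say in $[-B,B]$. I expect this to be the main obstacle. The idea is that an entire positive definite function that is of exponential order in every direction has a Fourier transform $\widehat f=dM$ supported in a bounded set; this is a Paley--Wiener--Schwartz type argument for positive measures. Concretely, I would first use analyticity at $0$ to get finite moments of all orders, since positive definite functions that are smooth at $0$ have $\int u^{2k}\,dM<\infty$. I would then use the Cauchy estimates coming from $|f(re^{i\theta})|\le Me^{\alpha r}$ to bound $\int u^{2k}\,dM=(-1)^k f^{(2k)}(0)\le C\,\alpha^{2k}(2k)!/(2k)^{2k}\cdots$, which is roughly $C'\alpha^{2k}$. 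Finally, a Chebyshev/moment argument, namely $M(|u|>\alpha+\epsilon)\le C'(\alpha/(\alpha+\epsilon))^{2k}\to0$, gives support in $[-\alpha,\alpha]$. Take $B=\alpha$.

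With compact support established, the integral representation continues analytically to all of $\mathbb C$, so
\[ g(x)=f(ix)=\int_{-B}^{B} e^{-xu}\,dM(u), \qquad g(x)e^{Bx}=\int_{-B}^{B} e^{x(B-u)}\,dM(u). \]
Since $B-u\ge0$ on the support, each derivative $\int (B-u)^n e^{x(B-u)}\,dM(u)$ is nonnegative. Hence $g(x)e^{Bx}$ is absolutely monotone on $[0,\infty)$.

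\emph{(ii) $\Rightarrow$ (i).} Assume $h(x)=g(x)e^{Bx}$ is absolutely monotone on $[0,\infty)$. By Bernstein's theorem on absolutely monotone functions (the version the paper alludes to), $h(x)=\int_{[0,\infty)} e^{xt}\,d\mu(t)$ for a positive measure $\mu$, at least for $x\ge0$. The integral converges there, and $h$ extends analytically. Then $g(x)=\int e^{x(t-B)}\,d\mu(t)$.

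Since $f$ is even, $g$ is even. The representation identifies $g$ on $[0,\infty)$, hence by analyticity everywhere it converges, and in particular on the whole real line by evenness. Writing $\nu$ for the pushforward of $\mu$ under $t\mapsto B-t$, we get $g(x)=\int e^{-xu}\,d\nu(u)$ with $\nu$ positive. Evenness of $g$ together with uniqueness of two-sided Laplace transforms forces $\nu$ to be symmetric. Finiteness of $\nu$ follows from $g(0)=\nu(\R)<\infty$.

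By the identity theorem, $f(z)=g(-iz)=\int e^{izu}\,d\nu(u)$ on a strip containing the real axis. Bochner's (easy) direction then shows $f$ is positive definite on $\R$: for reals $t_i$ and complex $\xi_i$,
\[ \sum_{i,j} f(t_i-t_j)\,\xi_i\overline{\xi_j}=\int \Bigl|\sum_i \xi_i e^{it_iu}\Bigr|^2\,d\nu(u)\ge0. \]
The remaining care point is that $\mu$ may be supported on all of $[0,\infty)$. Because of this, the continuation from the positive axis is justified by analyticity of $f$ rather than by compact support.
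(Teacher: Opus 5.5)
\textbf{Gap in (ii)$\Rightarrow$(i).} The gap is in (ii)$\Rightarrow$(i), at the step where you write $h(x)=g(x)e^{Bx}=\int_{[0,\infty)}e^{xt}\,d\mu(t)$. Bernstein's theorem gives that representation for functions absolutely monotone on $(-\infty,0]$, equivalently when $x\mapsto h(-x)$ is completely monotone on $[0,\infty)$. Absolute monotonicity on $[0,\infty)$ only says that $h$ is entire with nonnegative Taylor coefficients at $0$, which is much weaker. Any $\int e^{xt}\,d\mu(t)$ with $\mu\ge 0$ is log-convex by Cauchy--Schwarz, whereas $(1+x^2)e^{Bx}$ is not log-convex on $(1,\infty)$.

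The step cannot be patched, because the implication itself is false. Take $f(z)=1-z^2$, which is entire, even, and of exponential order. Then $g(x)=1+x^2$, and $(1+x^2)e^{Bx}$ has nonnegative Taylor coefficients. By \cref{lem:entire.is.tm} it is therefore absolutely monotone on $[0,\infty)$ for every $B>0$. Yet $f$ is not positive definite: for $t_1=0$, $t_2=2$ the matrix $\bigl(f(t_i-t_j)\bigr)$ has determinant $1-9<0$. (Positive definite functions must satisfy $|f(x)|\le f(0)$.) The paper's proof of this direction makes exactly the same appeal to Bernstein--Widder and fails for the same reason, so the converse cannot be proved as stated.

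\textbf{A correct converse.} Assume instead that $g(x)e^{-Bx}$ is completely monotone on $[0,\infty)$. By evenness this is the same as $g(x)e^{Bx}$ being absolutely monotone on $(-\infty,0]$.
\begin{enumerate}
\item Bernstein's theorem then gives $g(w)=\int e^{-wu}\,d\nu(u)$ for $\operatorname{Re}w>0$, with $\nu\ge0$ finite and carried by $[-B,\infty)$.
\item Since $|e^{-wu}|\le e^{B\operatorname{Re}w}$ for such $u$, dominated convergence extends the identity to the imaginary axis, giving $f(x)=\int e^{ixu}\,d\nu(u)$.
\item Hence $f$ is positive definite, and evenness forces $\nu$ to be supported in $[-B,B]$, which matches the output of your (i)$\Rightarrow$(ii) argument.
\end{enumerate}

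\textbf{The direction (i)$\Rightarrow$(ii).} This part of your proposal is correct. It differs from the paper only in replacing the appeal to Paley--Wiener by a self-contained moment and Chebyshev argument, which is valid because $M$ is positive. Optimizing the Cauchy estimate at $r=n/\alpha$ actually gives
\[
\int u^{2k}\,dM\le C e^{2k}(2k)!\,\alpha^{2k}/(2k)^{2k}\sim C\sqrt{4\pi k}\,\alpha^{2k}.
\]
The extra factor $\sqrt{k}$ is harmless in the Chebyshev limit.
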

	\begin{proof}
		We show that $(i)$ implies $(ii)$ if $f$ is of exponential order. Given that the continuous function $f(x)$ is positive-definite on the real line, the Bochner-Khintchine theorem\cite{Lukacs1987} implies that $f(x) = \int e^{i\omega u} \, dM(\omega)$ for some non-negative Stieltjes measure $M.$ By the Paley-Wiener theorem\cite{schwartz}, the exponential order condition on $f$ means that the measure $M$ is supported in an  interval $[-B,B].$ Then $g(x)e^{B x}$ can be written $ \int e^{\omega x} \, dM'(\omega)$ where $M'$ is $M$ translated horizontally by an additive factor $B.$ The translated measure $M'$ is supported in $[0,2B],$ and by Bernstein-Widder's theorem\cite{bernstein}, $g(x)e^{B x}=\int e^{\omega x} \, dM'(\omega)$ is therefore absolutely monotone. For the converse, if $g(x)e^{Bx}$ is absolutely monotone on the positive real line, for some $B>0,$ then by Bernstein-Widder's theorem, $g(x)e^{Bx}=\int e^{sx}\,dM$ for some non-negative finite Borel measure $M.$ By absolute convergence, this equality extends to the complex plane, and so $f(x)=e^{-iBx}\int e^{isx}\,dM.$ This shows that $f$ is a limit of positive definite functions, hence is positive definite. 
	\end{proof}
	Now a function-theoretical lemma: 
	\begin{lemma}If $f$ is an entire function and has a Taylor series expansion at $x=0$ with all coefficients non-negative, then the derivatives $f^{[k]}(x)$ are non-negative on $[0,\infty).$ \label{lem:entire.is.tm}\end{lemma}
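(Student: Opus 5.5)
Write $f(x)=\sum_{n\ge 0}a_n x^n$ with every $a_n\ge 0$. Since $f$ is entire, this series converges absolutely and locally uniformly on all of $\mathbb{C}$, and in particular on $[0,\infty)$. The plan is to differentiate the series term by term and read off the sign of each term.

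First, recall the standard fact that a power series with infinite radius of convergence may be differentiated termwise any number of times. Each derivative series again has infinite radius of convergence, because $\limsup |n a_n|^{1/n}=\limsup|a_n|^{1/n}=0$. So for every $k\ge 0$ and every $x\in\mathbb{C}$,
\[
f^{[k]}(x)=\sum_{n\ge k} n(n-1)\cdots(n-k+1)\,a_n\,x^{n-k}.
\]
Second, restrict to $x\in[0,\infty)$. There $x^{n-k}\ge 0$, the falling factorial $n(n-1)\cdots(n-k+1)$ is a nonnegative integer, and $a_n\ge 0$ by hypothesis. Every term of the series is therefore nonnegative, and so the convergent sum satisfies $f^{[k]}(x)\ge 0$.

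There is no real obstacle here. The only point needing care is that termwise differentiation is justified at every point of $[0,\infty)$, not only near $0$. This is exactly where entireness is used: the Taylor series at $0$ represents $f$ on the whole plane, so the formula above holds for all $x\ge 0$. A series with a finite radius of convergence $R$ would give the conclusion only on $[0,R)$.

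As a remark linking back to the definition of absolute monotonicity, the conclusion says $f$ is absolutely monotone on $[0,\infty)$ in the sense defined above. The lemma will then feed into \cref{lem:tm.is.pd}(ii), for example applied to $g(x)e^{Bx}$.
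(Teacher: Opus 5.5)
Your proof is correct and is essentially the paper's argument: both rest on the fact that termwise differentiation of an entire function with nonnegative Taylor coefficients at $0$ gives another such function, and that such a series is nonnegative on $[0,\infty)$. The paper inducts on $k$ and gets $f\ge 0$ from monotonicity, $f(x)\ge f(0)=a_0\ge 0$, whereas you write out the $k$-th derivative series explicitly and read off the sign term by term, which is slightly more direct.
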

	\begin{proof} We are given  $f(x)=\sum a_n x^n$ with the $a_n$ being non-negative. Since the terms of the series are increasing functions on the positive real line, $[0,\infty),$ and the series converges there, it follows that $f$ is an increasing function on $[0,\infty).$ In particular, then, $f(x)\geq f(0) = a_0\geq0.$ Now, notice that the derivative $f'(x)$ is again an entire function and has a Taylor series expansion at $x=0$ with all coefficients non-negative. We proceed inductively, concluding that all the derivatives $f^{[k]}(x)$ are non-negative on $[0,\infty).$
	\end{proof}
	\begin{proposition} Let $f(x)=\intSinc$ be entire. The following are equivalent:
		\begin{enumerate}\item $f$ is absolutely monotone on the positive imaginary axis;
			\item $f$ is absolutely monotone at zero on the positive imaginary axis, and
			\item $f$ is positive definite on the real line.
		\end{enumerate}\label{lem:pd.is.necessary}
	\end{proposition}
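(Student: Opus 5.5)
The plan is to prove the cycle $(i)\Rightarrow(ii)\Rightarrow(i)$ directly, and then to link $(iii)$ to these two in both directions. The link $(i)\Rightarrow(iii)$ will go through \cref{lem:tm.is.pd}, and the link $(iii)\Rightarrow(ii)$ through the Bochner--Khintchine theorem together with a computation of Taylor coefficients.

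Throughout, set $g(y):=f(iy)$. Since $\sin(iys)/(iys)=\sinh(ys)/(ys)$, we have
\[
g(y)=\int_0^\infty \frac{\sinh(ys)}{ys}\,dS(s)=\sum_{n\ge 0}\frac{y^{2n}}{(2n+1)!}\int_0^\infty s^{2n}\,dS(s).
\]
Because $f$ is entire, $g$ is entire. Its Taylor coefficients at $0$ are $g^{(k)}(0)/k!$. The odd-order derivatives vanish because $f$ is even.

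\emph{$(i)\Rightarrow(ii)$.} This is immediate: absolute monotonicity on $[0,\infty)$ includes the point $0$.

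\emph{$(ii)\Rightarrow(i)$.} Hypothesis $(ii)$ says $g^{(k)}(0)\ge 0$ for all $k$. Hence the Taylor series of the entire function $g$ at $0$ has non-negative coefficients. By \cref{lem:entire.is.tm}, every derivative $g^{[k]}$ is then non-negative on $[0,\infty)$, which is $(i)$.

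\emph{$(i)\Rightarrow(iii)$.} Assume $g$ is absolutely monotone on $[0,\infty)$. Fix any $B>0$; the function $e^{By}$ is also absolutely monotone there. By the Leibniz rule, $(g e^{By})^{(k)}=\sum_j\binom kj g^{(j)}B^{k-j}e^{By}\ge0$, so $g(y)e^{By}$ is absolutely monotone on the positive real line. The function $f$ is analytic and even, being an integral of the even entire kernel $\sin(xs)/(xs)$. Therefore the converse direction of \cref{lem:tm.is.pd}, which needs no exponential-order hypothesis, applies and gives that $f$ is positive definite on the real line.

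\emph{$(iii)\Rightarrow(ii)$.} Suppose $f$ is positive definite. It is continuous, so the Bochner--Khintchine theorem gives $f(x)=\int e^{i\omega x}\,dM(\omega)$ with $M\ge0$ finite. Since $f$ is smooth at $0$, the standard characteristic-function fact that existence of $f^{(2n)}(0)$ forces finiteness of the $2n$-th moment gives $\int\omega^{2n}\,dM<\infty$ and $f^{(2n)}(0)=(-1)^n\int\omega^{2n}\,dM(\omega)$. Then
\[
g^{(2n)}(0)=i^{2n}f^{(2n)}(0)=\int\omega^{2n}\,dM(\omega)\ge 0,
\]
and $g^{(2n+1)}(0)=0$, which is $(ii)$. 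As a cross-check, one can also identify $dM$ explicitly from the Shannon representation. Since $\sin(xs)/(xs)$ is the Fourier transform of $\frac{1}{2s}\chi_{[-s,s]}$, we get $dM(\omega)=m(\omega)\,d\omega$ with $m(\omega)=\int_{s>|\omega|}\frac{dS(s)}{2s}$, and positive definiteness means exactly $m\ge0$.

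The step I expect to need the most care is the moment argument in $(iii)\Rightarrow(ii)$. There one must justify that $f$ entire (in particular $C^\infty$ at $0$) yields finite even moments of $M$ and the stated formula for $f^{(2n)}(0)$. I would do this by the usual induction on symmetric second differences $\bigl(f(h)-2f(0)+f(-h)\bigr)/h^2$ combined with Fatou's lemma. I would also check that differentiating the Shannon series for $g$ term by term is legitimate, which follows from $g$ being entire.
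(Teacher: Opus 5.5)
\textbf{The gap is in $(i)\Rightarrow(iii)$, and for the statement as written it cannot be closed.}

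Your proof of $(i)\Leftrightarrow(ii)$ matches the paper. The step $(i)\Rightarrow(iii)$, however, rests on the converse direction of \cref{lem:tm.is.pd}, and that direction is false. Its proof misapplies the Bernstein--Widder theorem. The representation $\int_{[0,\infty)}e^{sx}\,dM(s)$ with $M\geq0$ characterizes absolute monotonicity on $(-\infty,0]$. On $[0,\infty)$, absolute monotonicity only says that the function is entire with non-negative Taylor coefficients. Already $f(x)=1+x^4$ is a counterexample: it is analytic and even, $g(x)e^{Bx}$ is absolutely monotone, yet $f$ is unbounded on $\mathbb{R}$ and so not positive definite.

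The Shannon hypothesis does not rescue the step when $S$ is signed, which the statement allows. Take $S=\delta_1-\tfrac34\delta_{1/2}$. Then $f(x)=\frac{\sin x}{x}-\frac{3\sin(x/2)}{2x}$ is entire, and
\[
g(y)=f(iy)=\sum_{n\geq0}\Bigl(1-\frac{3}{4^{n+1}}\Bigr)\frac{y^{2n}}{(2n+1)!}
\]
has positive coefficients, so $(ii)$ and $(i)$ hold. Yet $f(0)=\tfrac14$ and $f(\pi)=-\tfrac{3}{2\pi}$, so $|f(\pi)|>f(0)$ and $f$ is not positive definite.

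Your own cross-check explains why. Condition $(ii)$ only says that the even moments $\int\omega^{2n}m(\omega)\,d\omega=\frac{1}{2n+1}\int s^{2n}\,dS(s)$ of $m(\omega)=\int_{s>|\omega|}\frac{dS(s)}{2s}$ are non-negative. Condition $(iii)$ needs $m\geq0$ pointwise, by \cref{lem:pdf.f}. Here $m=\tfrac12\chi_{[-1,1]}-\tfrac34\chi_{[-\frac12,\frac12]}$ equals $-\tfrac14$ near the origin.

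The paper's proof invokes the same lemma at this point, so you inherited the gap rather than introduced it. If $S\geq0$ is intended, then $(iii)$ follows at once from \cref{th:when.is.it.positive}, $(i)$ follows from it, and the proposition becomes trivial. Either way, the route through \cref{lem:tm.is.pd} has to be abandoned.

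\textbf{Your $(iii)\Rightarrow(ii)$ argument is correct and differs from the paper's.} The paper cites a theorem of Buescu to identify the entire extension globally with the Fourier--Laplace transform of the Bochner measure. From this it reads off $(i)$ directly from $f(iy)=\int\cosh(yt)\,d\mu(t)$. You use only three ingredients:
\begin{itemize}
\item the classical fact that existence of $f^{(2n)}(0)$ forces a finite $2n$-th moment;
\item the resulting identity $g^{(2n)}(0)=\int\omega^{2n}\,dM\geq0$;
\item entireness of $g$ together with \cref{lem:entire.is.tm}, which spreads positivity along $[0,\infty)$.
\end{itemize}
This is more elementary, and it in effect reproves the part of Buescu's result that is needed. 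Since $\sum_n \frac{y^{2n}}{(2n)!}\int\omega^{2n}\,dM$ has infinite radius of convergence, monotone convergence gives $\int\cosh(y\omega)\,dM<\infty$ and $g(y)=\int\cosh(y\omega)\,dM$.
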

	\begin{proof}  Clearly $(i)$ implies $(ii).$ We now show that $(ii)$ implies $(i).$
		A function $f$ of the form $\intSinc$ is even, and real on the imaginary axis, because the integrand $\sinc{xs}$ has these properties as a function of $x.$  Define $g(x)=f(ix)$ for convenience. We note that $g$ is an entire function that is real on the real line, because $f$ was an even function, and $g$ is given to be absolutely monotone at the point $x=0.$ So the Taylor series of $g$ at $x=0$ has non-negative coefficients, and we apply \cref{lem:entire.is.tm}. By this Lemma, the function $g$ is absolutely monotone on the positive real line, and that proves condition $(i).$  \Cref{lem:tm.is.pd}, direction $ii$ implies $i,$ shows that condition $(i)$ implies condition $(iii)$.
		We now show that $(iii)$ implies $(i).$  Since $f(x)$ is positive definite, by \cite[th.4.3]{buescu} , the entire extension of \(f\) is the
Fourier--Laplace transform of its Bochner measure throughout
\(\mathbb C\). Since \(f\) is even, that measure is symmetric.
Thus, for \(y\geq0\),
\[
 f(iy)=\int_{\mathbb R}\cosh(yt)\,d\mu(t),
\]
and differentiation under the integral shows that every derivative
with respect to \(y\) is nonnegative. Therefore the integral, $f,$ is also absolutely monotone on the positive imaginary axis.
	\end{proof}
	Next we characterize $f$ indirectly, through the Fourier-Stieltjes transform.
	
	We first recall the basic elements  of the Fourier-Stieltjes transform. Let the function  $  F $
	have bounded variation on  $  (- \infty , + \infty ) $.
	The function
	
	\begin{equation}
		G ( x)  = \
		{
			\frac{1}{\sqrt {2 \pi } }
		}
		\int\limits _ {- \infty } ^ { {+ }  \infty }
		e  ^ {- ixy}  dF ( y)
		\label{FStransform}\end{equation}
	
	is called the Fourier--Stieltjes transform of  $  F $.
	The function  $  G $
	determined by the integral \eqref{FStransform} is bounded and continuous.
	Formula \eqref{FStransform}  can be inverted: If  $  F $
	has bounded variation and if
	
	$$
	F ^  \circ   ( x)  = \
	{
		\lim_{\varepsilon\to0+}\frac{F ( x + \varepsilon) + F ( x - \varepsilon) }{2}
	} ,
	$$
	
	then
	
	\begin{equation}\label{eqn:InverseFourierStieltjes}
		F ^  \circ   ( x) - F ^  \circ   ( 0)  = \
		{
			\frac{1}{\sqrt {2 \pi } }
		}
		\int\limits_ {- \infty } ^ { {+ }  \infty }
		G ( \xi ) \frac{e^{i \xi x} - 1}{i \xi }
		d \xi,
	\end{equation}
	
	where the integral is taken to mean the Cauchy principal value at  $  \infty $. Although $F^{\circ}$ may differ from $F$ at points of discontinuity of $F,$ nonetheless it maps under the Fourier-Stieltjes transform to the desired function $G.$
	
An important fact for us  will be that, if one only allows non-decreasing functions of bounded variation as the function  $f$
	in formula \eqref{FStransform} , then the set of continuous functions  $G$
	thus obtained has a striking characterization, by the Bochner--Khinchin theorem, as the positive definite continuous functions. 
	The  Bochner–Khinchin theorem is a necessary and sufficient condition for a continuous function  $  G $
	to be positive definite, or equivalently, to be a positive multiple of  the  characteristic function of a probability distribution. This theorem is all the more remarkable because the continuous bounded functions in the range of the Fourier-Stietjes transform do not have any elegant characterization. 
	
	The Fourier transform of a finite measure $S$ is a continuous function, $G,$ and if the measure $S$ is in addition non-negative, then
	as discussed, the
	continuous function $G$ is positive definite on $\R.$ We now investigate the relationship between a function  $f(x)$ defined by $f(x)=\intSinc$ and $G(\omega),$ the Fourier transform $G=\F (S)$ of $S.$ The Fourier transform $G$ is a continous function on the real line, extending to an analytic function in the complex plane.

	\begin{lemma} \label{lem:odes} Let $f$ and $G$ be differentiable functions on the real line, with locally bounded derivatives.
		Then the following are equivalent:
		\begin{enumerate}\item $f(x):=\int_0^1 G(xt)\,dt$;
			\item $xf'+f=G(x).$
		\end{enumerate}
	\end{lemma}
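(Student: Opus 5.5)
The plan is to reduce both directions to the identity
\[
\frac{d}{dx}\bigl(x f(x)\bigr) = x f'(x) + f(x),
\]
so that condition (ii) says exactly that $(xf)' = G$. Condition (i), after a change of variables, says that $xf(x)$ is an antiderivative of $G$ vanishing at $0$.

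\emph{(i) implies (ii).} For $x\neq 0$, substitute $u = xt$ in $\int_0^1 G(xt)\,dt$ to get
\[
x f(x) = \int_0^x G(u)\,du .
\]
This also holds trivially at $x=0$. Since $G$ is differentiable, it is continuous, so the fundamental theorem of calculus applies to the right-hand side. Differentiating gives $(xf)'(x) = G(x)$, that is, $xf'+f = G$ on all of $\R$. Here the product rule is justified because $f$ is assumed differentiable.

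\emph{(ii) implies (i).} Put $h(x) := x f(x) - \int_0^x G(u)\,du$. By hypothesis $h$ is differentiable with
\[
h'(x) = x f'(x) + f(x) - G(x) = 0 .
\]
Hence $h$ is constant by the mean value theorem, and since $h(0) = 0$ we get $h\equiv 0$. So for $x\neq 0$, undoing the substitution gives
\[
f(x) = \frac{1}{x}\int_0^x G(u)\,du = \int_0^1 G(xt)\,dt .
\]

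The only delicate point is $x=0$, where we cannot divide by $x$. We handle it by continuity. Both $f$ and $x\mapsto\int_0^1 G(xt)\,dt$ are continuous at $0$: the first because it is differentiable, the second by dominated convergence, using that $G$ is locally bounded near $0$. As $x\to 0$ the integral tends to $G(0)$. Evaluating (ii) at $x=0$ gives $f(0) = G(0)$, so the two sides agree at $0$ as well.

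The local boundedness of the derivatives is stronger than this argument needs. At most it can be used to justify differentiating under the integral sign directly, as an alternative to the substitution: $\frac{d}{dx}\int_0^1 G(xt)\,dt = \int_0^1 t\,G'(xt)\,dt$, and integration by parts then gives $x f' = G - f$. I expect no real obstacle; the one point needing care is the $x=0$ step.
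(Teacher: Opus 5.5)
Your proof is correct, but it takes a different route from the paper's. For (i)$\Rightarrow$(ii), the paper differentiates under the integral sign, using the local bound on $G'$ and dominated convergence, to get $f'(x)=\int_0^1 tG'(xt)\,dt$. It then integrates by parts to get $f'(x)=G(x)/x-f(x)/x$ for $x\neq 0$, and checks $x=0$ separately; this is the alternative you mention at the end. For (ii)$\Rightarrow$(i), the paper argues through ODE theory: $\int_0^1 G(xt)\,dt$ is a particular solution of the singular equation $xf'+f=G$, the homogeneous solutions are $C/x$, and continuity forces $C=0$. Your substitution $u=xt$ instead turns (i) into $xf(x)=\int_0^x G(u)\,du$ and (ii) into $(xf)'=G$, so both directions reduce to the fundamental theorem of calculus and the mean value theorem. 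This buys two things. First, you only need continuity of $G$, so the hypothesis of locally bounded derivatives becomes superfluous, as you note. Second, the singular point is handled cleanly: $h(x)=xf(x)-\int_0^x G$ is differentiable on all of $\mathbb{R}$, so it is one global constant, fixed by $h(0)=0$. The paper's ``add $C/x$'' step implicitly works on each half-line separately, where a priori there are two constants $C_\pm$ and uniqueness of solutions away from $0$ must be invoked; continuity at $0$ does eliminate both, but your formulation makes this automatic. One small remark: your continuity argument at $x=0$ is unnecessary. There (i) reads $f(0)=\int_0^1 G(0)\,dt=G(0)$, which is exactly what (ii) gives at $x=0$.
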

	\begin{proof} Suppose first that equation $i$ is given. Note that the derivative $G'$ is bounded on any finite interval, so we may differentiate under the integral sign in $f(x):=\int_0^1 G(xt)\,dt,$ using a constant as the bound in the Lebesgue domininated convergence theorem. This gives
		\begin{align*}
			f'(x) &= \int_0^1 G'(xt) t \,dt \\
			&= \frac{G(x)}{x} - \frac{1}{x}\int_0^1 G(xt)\,dt. \\
			\intertext{Substitute $f(x):=\int_0^1 G(xt)\,dt$ to get }\\
			f'(x)&= \frac{G(x)}{x} - \frac{1}{x} f(x). \\
		\end{align*}  
		This shows that condition $i$ implies condition $ii$  for all nonzero $x.$ Since, at the point $x=0,$ conditions $i$ and $ii$ both  simplify to $f(0)=G(0),$ we have  that   condition $i$ implies condition $ii$ for all real $x.$
		For the converse direction, a similar argument shows that $f(x):=\int_0^1 G(xt)\,dt$ is a particular solution of the singular inhomogeneous differential equation $xf'+f=G(x).$ Adding a general solution for the related homogeneous differential equation $x f'+f=0$ we obtain a complete solution
		\[ f(x)= \int_0^1 G(xt)\,dt + C\frac1x .\] As we are seeking solutions in the class of continuous functions, we choose $C=0,$ thus recovering condition $i.$ This concludes the proof that condition $ii$ implies condition $i.$
	\end{proof}

	\begin{lemma} \label{cor:factorization} Let $S$ denote the Stieltjes measure of a bounded function, and denote its Fourier transform by $G(\omega).$
		Then the following are equivalent:
		\begin{enumerate}\item $f(x)=\intSinc$;
			\item $f(x)=\int_0^1 G(xt)\,dt$ and;
			\item $G(x)=x f'+f$.
		\end{enumerate}\label{lem:de.form}
	\end{lemma}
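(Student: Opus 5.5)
The plan is to prove (i)$\Leftrightarrow$(ii) directly by Fubini, and then get (ii)$\Leftrightarrow$(iii) from \cref{lem:odes}. We use the convention that $G$ is the cosine/Fourier transform of $S$ normalized so that
\[
G(x)=\int_0^\infty \cos(xs)\,dS(s)
\]
(equivalently, the Fourier--Stieltjes transform \eqref{FStransform} of the even extension of $S$, up to the constant $\sqrt{2\pi}$). Since $f$ is even, this is the natural normalization; with the exponential kernel one simply takes real parts, as the imaginary (odd) part integrates against an even kernel to zero.

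\textbf{Step 1, (ii)$\Rightarrow$(i) and (i)$\Rightarrow$(ii).} We use the elementary identity
\[
\int_0^1 \cos(xts)\,dt=\frac{\sin(xs)}{xs},
\]
valid for $xs\neq 0$ and with value $1$ at $xs=0$. The integrand $(t,s)\mapsto\cos(xts)$ is bounded by $1$, and $[0,1]\times[0,\infty)$ carries the finite measure $dt\times|dS|$, since $S$ is the Stieltjes measure of a bounded function and hence has finite total variation. Fubini therefore gives
\[
\int_0^1 G(xt)\,dt=\int_0^1\!\!\int_0^\infty \cos(xts)\,dS(s)\,dt=\int_0^\infty \frac{\sin(xs)}{xs}\,dS(s).
\]
So the right-hand sides of (i) and (ii) are the same function, and the two conditions are trivially equivalent. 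At $x=0$ both sides equal $S([0,\infty))=G(0)$.

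\textbf{Step 2, (ii)$\Leftrightarrow$(iii).} Here we want to invoke \cref{lem:odes}, which requires $f$ and $G$ to be differentiable with locally bounded derivatives. This is the main obstacle: for a general finite measure, $G'(x)=-\int s\sin(xs)\,dS(s)$ need not exist. I would handle it in one of two ways.

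First, I would try to read the hypothesis as including the finite first moment $\int s\,d|S|<\infty$. This is the "mild condition" the introduction alludes to. Under it, $G\in C^1$ with $|G'|\le\int s\,d|S|$, and the same bound holds for $f$ by differentiating the sinc kernel, whose $x$-derivative is bounded by $s/3$. \Cref{lem:odes} then applies verbatim.

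Failing that, I would avoid derivatives of $G$ altogether. For $x\neq0$, (ii) reads $xf(x)=\int_0^x G(u)\,du$ after substituting $u=xt$. Since $G$ is continuous (being a Fourier--Stieltjes transform), the right side is $C^1$, so $xf$ is differentiable with $(xf)'=xf'+f=G$, which is (iii). Conversely, if (iii) holds with $f$ continuous, then $(xf)'=G$. Integrating from $0$ and using that $xf(x)\to0$ as $x\to0$ recovers $xf(x)=\int_0^xG$, which is (ii). This is the same choice $C=0$ made in \cref{lem:odes}: any other constant would produce the term $C/x$, which is incompatible with continuity at $0$.

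This second route needs only continuity of $f$ and $G$, and it makes the equivalence hold for all finite $S$, with (iii) understood in the sense that $xf$ is differentiable with derivative $G$. I would present this route, and remark that under the moment condition it coincides with \cref{lem:odes}.
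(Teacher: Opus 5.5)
Your proposal is correct, and for (i)$\Leftrightarrow$(ii) it is essentially the paper's Fubini argument. The paper writes $\frac{\sin\omega}{\omega}=\frac12\int\chi_{[-1,1]}(t)e^{i\omega t}\,dt$ and then symmetrizes $S$, so that $G$ is real and even and the $t$-integral folds onto $[0,1]$. Your identity $\int_0^1\cos(xts)\,dt=\frac{\sin(xs)}{xs}$, used with the cosine normalization of $G$, builds that symmetrization in from the start.

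The genuine difference is in (ii)$\Leftrightarrow$(iii). The paper asserts that finiteness of $S$ already makes $G$ and $f$ continuously differentiable, and then quotes \cref{lem:odes}. That assertion is false: for $dS=ds/(1+s^2)$ on $[0,\infty)$ one gets $G(x)=\frac{\pi}{2}e^{-|x|}$ and $f(x)=\frac{\pi}{2}(1-e^{-|x|})/|x|$, and neither is differentiable at $0$.

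Your first route makes the paper's argument valid by adding a first-moment hypothesis. Your second route uses only continuity of $G$ and $xf(x)=\int_0^x G(u)\,du$, and it is the better proof, since it gives the equivalence for every finite $S$. The same example shows that its one concession cannot be dropped without extra hypotheses: (iii) must be read at $x=0$ as $(xf)'(0)=G(0)$ rather than through $f'(0)$.

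One harmless slip: the $x$-derivative of $\frac{\sin(xs)}{xs}$ is $-s\int_0^1 t\sin(xst)\,dt$, so it is bounded by $s/2$, not $s/3$ (its supremum is about $0.436\,s$). Any bound linear in $s$ is enough for your first route.
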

	\begin{proof} From the convolution property of the Fourier transform it follows that $(1)$ implies $(2).$ To see this, we first of all recall that  $$\sinc{\omega}=\frac12\int_{-\infty}^{\infty} \chi_{[-1,1]} e^{i\omega t}\,dt,$$ where $\chi_{[-1,1]}$ is the indicator function of the interval $[-1,1].$  Putting the above equation into equation $(1)$ and changing  the order of integration, as we may by Fubini's theorem because $S$ is a finite measure, we have  $f(x)=\frac12 \int \chi_{[-1,1]}G(xt)\,dt,$ where $G=\F (S).$ Since the integral kernel is an even function, the measure $S$ may as well be supposed to be symmetrical, its Fourier transform function $G(u)$ is real and even, and since the function $\chi_{[-1,1]}$ is also even, the integrand is therefore an even function of $t;$ so we can get rid of the factor of $\frac 12$, obtaining $f(x)=\int_0^\infty \chi_{[-1,1]} G(xt)\,dt$ as was to be shown. The converse direction is similar. 
		
		Since the measure $S$ is finite, the functions $G$ and $f$ are continuously differentiable and we can apply  \cref{lem:odes} to show the equivalence of $2$ and $3.$ 
	\end{proof}
	\begin{corollary} If $f(x)=\int \sinc{xs}\, dF(s)$ where $dF$ is the Riemann-Stieltjes measure of a  bounded function $F$ of bounded variation, then $F$ can be recovered from $f(x)$ by $$F^{\circ}(x)=\F\inv\!(xf'+f),$$ where $\F\inv$ denotes the inverse Fourier transform from \cref{eqn:InverseFourierStieltjes}. \label{cor:InverseFourierStieltjes}
	\end{corollary}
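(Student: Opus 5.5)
The corollary follows by chaining \cref{lem:de.form} with the Fourier--Stieltjes inversion formula \eqref{eqn:InverseFourierStieltjes}. The plan is first to identify $xf'+f$ with the Fourier--Stieltjes transform $G$ of $F$, and then to invert.

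\emph{Step 1: apply the lemma.} Since $F$ is bounded and of bounded variation, $dF$ is a finite Borel measure. The hypotheses of \cref{lem:de.form} therefore hold with $S=F$. The implication $(1)\Rightarrow(3)$ of that lemma gives
\[
xf'(x)+f(x)=G(x),
\]
where $G=\F(dF)$ is the Fourier--Stieltjes transform \eqref{FStransform} of $F$. Here I would check that the normalisation of the Fourier transform used in \cref{lem:de.form} agrees with the factor $1/\sqrt{2\pi}$ in \eqref{FStransform}. If it does not, $G$ changes only by a constant multiple, and the conclusion is adjusted by the same constant.

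\emph{Step 2: invert.} Apply \eqref{eqn:InverseFourierStieltjes} to $G=xf'+f$. This recovers $F^\circ(x)-F^\circ(0)$ as the principal-value integral of $G(\xi)(e^{i\xi x}-1)/(i\xi)$. This is exactly what the notation $\F\inv$ means in the statement, where $\F\inv$ is the inverse from \eqref{eqn:InverseFourierStieltjes}. Since the kernel only sees $dF$, $F$ is determined up to an additive constant anyway, so the normalisation $F^\circ(0)$ is harmless.

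\emph{Main obstacle: the symmetrisation.} The proof of \cref{lem:de.form} quietly replaces $S$ by its symmetrisation, because the kernel $\sin(xs)/(xs)$ is even in $s$. Consequently the $G$ obtained from $f$ is the transform of the even part of $dF$, and the formula recovers $F$ only modulo this symmetrisation. I would resolve this by adopting the paper's convention: $dF$ lives on $[0,\infty)$, as in the Shannon integral. One extends $dF$ evenly to $\R$ with half the mass on each side, applies the inversion to this symmetric measure, and then restricts to $x\ge 0$. On $[0,\infty)$ this gives $F^\circ$ up to the factor $\tfrac12$ and the additive constant already fixed by the convention at $0$. The factor and any point mass at $0$ are absorbed in the same way as in the proof of \cref{lem:de.form}.
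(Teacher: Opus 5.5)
Your proposal is correct and follows the route the paper intends: the paper gives no separate proof, treating the corollary as an immediate consequence of \cref{lem:de.form}, direction $(1)\Rightarrow(3)$, which identifies $xf'+f$ with the Fourier transform $G$ of $dF$, followed by the inversion formula \eqref{eqn:InverseFourierStieltjes}. The caveats you flag are genuine imprecisions in the statement as written, not gaps in your argument: the $\sqrt{2\pi}$ normalisation, the additive constant $F^{\circ}(0)$, and the fact that $f$ only sees the symmetrisation of $dF$.
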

	%Even if one cannot verify the  condition on $F,$ the above may sometimes give a trial solution. 
	The above corollary completely determines when a function is in the class of interest, but requires computing a Fourier transform. We now give two sufficient conditions that do not involve Fourier transforms, and are stated entirely in terms of the function $f$ or sometimes $\frac{f(x)}x$. 
	
	The first one supposes convexity on the positive part of the real line, meaning to the right of the origin. 
	\begin{lemma}Let $f$ be an odd real function. Suppose that the derivative, $f',$ is a convex function on the positive part of the real line,  is continuous and goes to zero at infinity. Then there exists a finite positive Stieltjes measure $M$\footnote{As mentioned,  $dM$, and  $dm$, are used for measures coming from Bochner-Khintchine, and $dS$ for the measure appearing in a Shannon integral}  such that
		$\frac{f(x)}{x}= \int \frac{\sin sx}{sx} \,dM(s),$ for  all real $x.$ The same holds if instead of assuming that $f'$ is convex, we assume that $f'$ is positive definite.\label{lem:Polya1}\end{lemma}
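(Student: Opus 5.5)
The plan is to reduce everything to Bochner's theorem applied to the derivative $f'$, and then integrate once. The Shannon kernel appears naturally because $\int_0^x\cos(su)\,du=\frac{\sin(sx)}{s}$.

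First I will record the symmetry facts. Since $f$ is odd and differentiable, $f'$ is even and $f(0)=0$, so $f(x)=\int_0^x f'(u)\,du$ for all real $x$. In the convex case, a convex function on $(0,\infty)$ that tends to zero at infinity must be nonincreasing and nonnegative there. Indeed, if it increased somewhere, convexity would force it to grow without bound; and a nonincreasing function with limit $0$ is $\geq 0$. Together with evenness and continuity at $0$, this puts $f'$ exactly in the Pólya class: even, continuous, convex and decreasing on $(0,\infty)$, and tending to $0$. Pólya's criterion then says that $f'$ is positive definite. In the alternative hypothesis $f'$ is assumed positive definite, so from this point on both cases are handled together.

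Next I apply the Bochner--Khintchine theorem, as in the proof of \cref{lem:tm.is.pd}. This gives a finite nonnegative measure $\mu$ on $\R$ with $f'(u)=\int e^{isu}\,d\mu(s)$, and $\mu(\R)=f'(0)<\infty$. Because $f'$ is real and even, $\mu$ may be taken symmetric, so $f'(u)=\int\cos(su)\,d\mu(s)$. Folding $\mu$ onto $[0,\infty)$ gives a finite positive measure $M$ on $[0,\infty)$ with $f'(u)=\int_0^\infty \cos(su)\,dM(s)$.

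Finally I integrate from $0$ to $x$. Since $M$ is finite and the integrand is bounded by $1$ on $[0,x]\times[0,\infty)$, Fubini's theorem applies and gives
\[
f(x)=\int_0^\infty\int_0^x\cos(su)\,du\,dM(s)=\int_0^\infty \frac{\sin(sx)}{s}\,dM(s),
\]
with the integrand interpreted as $x$ at $s=0$. Dividing by $x\neq0$ gives $\frac{f(x)}{x}=\int\frac{\sin sx}{sx}\,dM(s)$. At $x=0$ both sides equal $M([0,\infty))=f'(0)$, by continuity of $f'$ and the limit $\lim_{x\to0} f(x)/x=f'(0)$, so the identity holds for all real $x$. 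In the convex case one can also note that any atom of $M$ at $s=0$ equals $\lim_{T\to\infty}\frac1T\int_0^T f'=0$, although the formula does not need this.

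The main obstacle is making sure Pólya's criterion is applicable, which is the monotonicity and nonnegativity argument in the first step, and that its conclusion is positive definiteness with a finite measure. Once $f'$ is known to be positive definite and continuous, the rest is a routine Fubini computation.
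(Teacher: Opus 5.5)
Your proposal is correct and follows the paper's proof: P\'olya's criterion (or the positive-definiteness hypothesis directly), then Bochner--Khintchine to write $f'(x)=\int\cos(sx)\,dM(s)$ with $M$ finite and positive, then integration from $0$ to $x$ with Fubini and $f(0)=0$. You also supply details the paper leaves implicit: that convexity and decay at infinity force $f'$ to be nonincreasing and nonnegative on $(0,\infty)$, so P\'olya's criterion applies, and the check of the identity at $x=0$.
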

	\begin{proof} Since $f'$ is an even real function whose graph is convex on the positive real line, P\'olya's criterion\cite{Lukacs1987} implies that it is positive definite. By Bochner-Khinchine, the even function $f'$ can then be written
		$$f'(x)=\int  \cos sx \,dM(s)$$ for some finite positive Stieltjes measure $M.$ Integrating from $0$ to $x$ and using Fubini's theorem to change order we conclude that
		$f(x)-f(0)=\int_0^\infty  \frac{\sin sx}{s} \,dM(s),$ giving a positive Stieltjes measure with the required properties. Since $f$ is odd, $f(0)=0.$ 
		For the last part, if we have assumed that $f'$ is already positive definite, then we can apply this fact directly instead of using P\'olya's criterion.
	\end{proof}
	
	For some important cases, such as $f$ with logarithmic behaviour
	at infinity, the following elementary result can be used:
	\begin{lemma}\ Let $f$ be a real and odd function. If there is a constant $
		C$ such that $f'-C$ and
		$f^{\prime\prime}$ are $L^2({\mathbb R}),$ then $\frac{f(x)}x =\int_{{\mathbb R}^{+}}\frac {\sin
			sx}{sx}\,dm(s),$ for some finite
		measure $S.$\label{lem:L1} \end{lemma}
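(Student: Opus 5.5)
The plan is to reduce the statement to a Fourier representation of the even function $f'$, in the spirit of \cref{lem:Polya1}, with $L^2$ theory replacing P\'olya's criterion. Set $h:=f'-C$. Since $f$ is real and odd, $h$ is real and even. By hypothesis $h\in L^2(\mathbb R)$ and $h'=f''\in L^2(\mathbb R)$, so $h$ lies in the Sobolev space $H^1(\mathbb R)$. Its Fourier transform $\widehat h$ is then real and even, and satisfies $(1+\omega^2)^{1/2}\widehat h\in L^2$. By Cauchy--Schwarz,
$$\int|\widehat h(\omega)|\,d\omega\le \Bigl(\int (1+\omega^2)^{-1}d\omega\Bigr)^{1/2}\Bigl(\int(1+\omega^2)|\widehat h|^2d\omega\Bigr)^{1/2}<\infty,$$
so $\widehat h\in L^1$. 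Fourier inversion then holds pointwise; an $H^1$ function is continuous, and $f'$ is continuous in any case. This gives
$$f'(x)=C+\int_0^\infty \cos(sx)\,\rho(s)\,ds,$$
with $\rho=2\widehat h|_{[0,\infty)}$ (up to the normalization constant) in $L^1[0,\infty)$.

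Next we absorb the constant. Put $dm(s)=C\,\delta_0+\rho(s)\,ds$ on $[0,\infty)$. This is a finite (signed) measure, and $f'(x)=\int_{[0,\infty)}\cos(sx)\,dm(s)$, because $\cos(0\cdot x)=1$. Now integrate from $0$ to $x$, using $f(0)=0$ since $f$ is odd. Fubini is justified because $|\cos|\le 1$ and $m$ is finite. We obtain $f(x)=\int \frac{\sin(sx)}{s}\,dm(s)$, where at $s=0$ the integrand is read as its limit $x$. Dividing by $x\neq 0$ gives
$$\frac{f(x)}{x}=\int_{\mathbb R^+}\frac{\sin sx}{sx}\,dm(s).$$
At $x=0$ both sides equal $f'(0)=m([0,\infty))$ by continuity, so the identity holds for all real $x$.

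The main point to check is the step $\widehat h\in L^1$: $h\in L^2$ alone is not enough, and this is exactly where the hypothesis $f''\in L^2$ enters. A second point is that the resulting measure $m$ is finite but in general signed, not positive; the statement only asks for finiteness, so no positivity argument is needed. One should also check that "$f''$" can be understood as a weak derivative. If $f''$ is classical and locally bounded this is automatic, and otherwise we assume $f'$ is absolutely continuous so that $h\in H^1$ is meaningful.
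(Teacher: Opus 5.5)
Your proposal is correct and follows essentially the same route as the paper: set $g=f'-C$, get $\hat g\in L^1$ from $\hat g,\ \omega\hat g\in L^2$, use Fourier inversion to write $f'$ as a cosine integral against $C\delta_0$ plus a multiple of $\hat g\,d\omega$, integrate from $0$ to $x$ with Fubini using $f(0)=0$, and divide by $x$. The only difference is in the $L^1$ step, and it is cosmetic: your single weighted Cauchy--Schwarz bound with $(1+\omega^2)^{-1/2}$ replaces the paper's splitting into $\{|\hat g|\ge 1\}\cap(0,1)$, its complement in $(0,1)$, and $[1,\infty)$, and it is cleaner (in the paper the weight $p(\omega)=\max\{1,|\omega|^{-1}\}$ has to be read as $\min\{1,|\omega|^{-1}\}$ for that estimate to work).
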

	\begin{proof}\ Let $g=f'-C,$ and let the hat $\hat{\cdot}$ denote the usual
		exponential Fourier transform.
		By Parseval's theorem, $|\hat {g}(\omega )|,|\omega\hat {g}(\omega )|\in
		L^2({\mathbb R}).$ Let $M$ be the
		(measurable) set $M:=\{\omega\in (0,1):|\hat {g}(\omega )|\geq 1\}.$ 

                   Let $p(\omega ):=\max\{1,|\omega |^{-1}\}.$ Then
		\begin{eqnarray*}
			\int_{{\mathbb R}^{+}}|\hat {g}|&=&\int_M|\hat {g}|+\int_{I\setminus M}|\hat {
				g}|+\int_1^{\infty}|\hat {g}(\omega )\omega |\cdot |p(\omega )|\,d\!\omega\\
			&\leq&\|\hat {g}\|_2^2+1+\|\hat {g}(\omega )\omega\|_2\cdot\|p\|_2\end{eqnarray*}
		We see that $\hat {g}$ is $L^1.$ Thus
		\[f'(x)=\int_0^{\infty}\cos sx\cdot Cd\delta_0(s)+\int_0^{\infty}\cos sx\cdot
		2\hat {g}(s)\,d\!\mu (s),\]
		where $\mu$ is the standard Lebesgue measure, and $\delta_0$ is the
		point measure, or mass measure, with support at 0. As $f$ is an odd function, $f(0)=0$, and so the las equation reduces to 
                   \[f'(x)=C+2\int_0^{\infty}\cos sx\cdot
		\hat {g}(s)\,d\!\mu (s)\]
Integratinging both sides of this equality with respect to $x$ and using first fundamental theorem we get 
                  \[f(x)=Cx+2\int_0^x\int_0^{\infty}\cos sx\cdot
	        \hat {g}(s)\,d\!\mu (s)\,d t.\]
We showed above that $\hat{g}\in L^1(\mathbb{R}),$ and we have 
                  \[\int_0^x\int_0^{\infty}|\cos sx\cdot
	        \hat {g}(s)|\,d\!\mu (s)\,d t \leq \int_0^x\int_0^{\infty}|\
	        \hat {g}(s)|\,d\!\mu (s)\,d t= x\int_0^{\infty}|\hat{g}(s)|<\infty.\]
	        
So by Fubini's theorem we can change the order of integration and calculate $f(x)$ explicilty. This allows us to write down $f(x)$ explecitly as \[f(x)=Cx+2\int_0^{\infty}\frac{\sin sx}{s}\hat{g}(s)ds\cdot\]After deviding both sides by $x$, it suffices to define 
\[dm(\omega):=Cd\delta_0(\omega) + \hat{g}d\omega \footnote{Of course $\omega$ is just a dummy variable but instead of s, here, we use $\omega $ to be consistent with Fourier variable}\]

\end{proof}
	\begin{theorem} Let $f(x)=\intSinc,$ with $S$ a finite signed measure. Then the first two are equivalent, and imply the third:
		\begin{enumerate}\item The Fourier transform of $f$ is real and monotone decreasing on $[0,\infty);$
			\item $S$ is a positive finite measure, and
			\item $f$ is positive definite.
		\end{enumerate}\label{th:when.is.it.positive}
	\end{theorem}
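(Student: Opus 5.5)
We compute the Fourier transform of $f$ directly from the Shannon integral and read off the measure. Without loss of generality we take $S$ on $[0,\infty)$, as in the introduction; at most an atom at $s=0$ contributes a constant to $f$. Normalise the transform of the box kernel so that, for $s>0$,
\[
\frac{\sin(xs)}{xs}=\frac{1}{2s}\int_{-s}^{s}e^{ix\omega}\,d\omega ,
\]
so that $x\mapsto \frac{\sin(xs)}{xs}$ has Fourier transform proportional to $\frac{1}{2s}\chi_{[-s,s]}(\omega)$. This is the same identity used in the proof of \cref{lem:de.form}.

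Integrating against $dS(s)$ and using Fubini, justified in the distributional sense by testing against a Schwartz function since $S$ is finite, gives, up to the normalising constant,
\[
\widehat f(\omega)=\int_{(|\omega|,\infty)}\frac{dS(s)}{2s}, \qquad \omega\neq0 .
\]
If $S$ has an atom at $0$, this adds a multiple of $\delta_0$, which does not affect the half-line $(0,\infty)$. Thus $\widehat f$ is automatically real and even, and on $(0,\infty)$ it is the tail function of the measure $d\nu(s)=\frac{dS(s)}{2s}$. The main technical point is the finiteness of $\int_{(\omega,\infty)}\frac{dS(s)}{2s}$ for every $\omega>0$. It holds because $1/s\le 1/\omega$ there and $S$ is finite.

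With this formula both implications are immediate. For $(2)\Rightarrow(1)$: if $S\ge0$ then $\nu\ge0$, so $\omega\mapsto\nu((\omega,\infty))$ is real and non-increasing on $(0,\infty)$. For $(1)\Rightarrow(2)$: suppose $\widehat f$ is monotone decreasing on $(0,\infty)$. Its distributional derivative there is $-\frac{1}{2\omega}\,dS(\omega)$, and monotone decrease makes this a non-positive measure. Hence $dS\ge0$ on $(0,\infty)$.

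A possible atom of $S$ at $0$ is not seen by $\widehat f$ on $(0,\infty)$. I would handle it using the fact that $f(x)-S(\{0\})=\int_{(0,\infty)}\frac{\sin(xs)}{xs}\,dS(s)$ tends to $0$ as $x\to\infty$ by dominated convergence, so $S(\{0\})=\lim_{x\to\infty}f(x)$. If this creates friction, I would interpret the hypothesis on $[0,\infty)$ as including the behaviour at $0$, where the atom appears as a $\delta$-mass. This bookkeeping at $\omega=0$, together with justifying the distributional differentiation, is where I expect the only real care to be needed.

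For $(2)\Rightarrow(3)$ there are two routes. The first: each $x\mapsto\frac{\sin(xs)}{xs}$ is positive definite, since it is the Fourier transform of the positive measure $\frac{1}{2s}\chi_{[-s,s]}$, or equal to $1$ when $s=0$. A positive mixture of positive definite functions is positive definite, because the quadratic form $\sum f(t_i-t_j)\xi_i\bar\xi_j$ is the integral over $S$ of non-negative quantities. The second: $\widehat f\ge0$ is integrable near infinity apart from a $\delta_0$ term, so Bochner--Khinchin gives positive definiteness directly.
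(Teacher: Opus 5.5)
Your proposal is correct and follows essentially the same route as the paper. Both identify the Fourier transform of $f$ with the tail function $\omega\mapsto\int_{[|\omega|,\infty)}\frac{dS(s)}{2s}$ and read off the equivalence of (1) and (2) from it. Your distributional-derivative step is a cleaner version of the paper's Hahn-decomposition remark. Your mixture-of-sinc argument replaces the paper's deduction of (3) from (1), which goes ``decreasing and tending to zero, hence non-negative, hence positive definite by Bochner--Khinchin''. Your tempered-distribution framing is actually more careful than the paper's $L^2$ argument, since the paper's claim $f\in L^2(\mathbb{R})$ fails when $S$ has an atom at $0$ (or enough mass near $0$). You are also right that such an atom needs the convention you propose: $S(\{0\})=\lim_{x\to\infty}f(x)$ identifies the atom but does not make it non-negative when (1) is read only on $(0,\infty)$.
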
\begin{proof}Since the given integral kernel $\sinc{xs}$ is an even function, we can always suppose that the measure $S$ is supported within $[0,\infty).$ Noting that the given function $f$ is in $L^2(R)$ we compute its inverse $L^2$-Fourier transform, \begin{equation}\check f =\int \frac{1}{2s}\chi_{[-s,s]}(x)\,dS(s),\label{eq:change.here}\end{equation} 
		where $\chi$ denotes an indicator function and we have implicitly used Fubini's theorem to change order of integration.  
		Now we observe that because the indicator function $\chi_{[-s,s]}(x)$ is zero when $s$ is less than $x,$ we can restrict the domain of integration, to $s\geq x>0,$ obtaining
		\begin{equation}\check f(x) =\int^{\infty}_x \frac{1}{2s}\,dS(s).\label{eq:RadonNikodym}\end{equation}
		It is a key point that the integrand is a positive function, and so if $S$ is a positive measure, then the above function increases as $x$ decreases. This shows that property $(ii)$ implies property $(i).$ Furthermore, if the measure $S$ is not in fact positive, then the Hahn decomposition gives us a set on which the measure is negative, and where the above function is thus not monotone decreasing. Thus the properties $(i)$ and $(ii)$ are equivalent. On the other hand, if property $(i)$ holds, then the Fourier transform of $f$ is an even and real function that is monotone decreasing on  $[0,\infty);$ and is in $L^2(R),$ so that it necessarily goes to zero at infinity. A function that does all this must be a non-negative function, and so this shows that the function $F$ is positive definite. This shows that $i$ implies $iii.$ 
	\end{proof}
	
	\begin{corollary} If $f(x)=\intSinc,$ with $S$ being absolutely continuous against Lebesgue measure, then 
		\[ -2s\frac{d}{dx} \Finv(f) = \left[\frac{dS}{d\mu}\right]. \]
	\end{corollary}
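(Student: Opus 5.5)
The plan is to build directly on the computation in the proof of \cref{th:when.is.it.positive}, specifically formula \eqref{eq:RadonNikodym}. There we obtained, for $x>0$,
\[
\check f(x)=\Finv(f)(x)=\int_x^\infty \frac{1}{2s}\,dS(s),
\]
with $S$ supported in $[0,\infty)$. This is justified because $f\in L^2(\R)$ and Fubini's theorem applies to the finite measure $S$. Under the present hypothesis, $S$ is absolutely continuous with respect to Lebesgue measure $\mu$. We therefore write $dS(s)=\rho(s)\,ds$, where $\rho=\left[\frac{dS}{d\mu}\right]\in L^1([0,\infty))$ is the Radon--Nikodym derivative. The formula then becomes
\[
\check f(x)=\int_x^\infty \frac{\rho(s)}{2s}\,ds .
\]

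Next we differentiate in $x$. Lebesgue's differentiation theorem (the fundamental theorem of calculus for Lebesgue integrals) gives
\[
\frac{d}{dx}\check f(x)=-\frac{\rho(x)}{2x}\quad\text{for almost every } x>0 .
\]
Multiplying by $-2x$ gives $-2x\,\frac{d}{dx}\Finv(f)(x)=\rho(x)$ almost everywhere. This is the claimed identity once the variable is renamed $s$, since the statement writes the left-hand side in the variable $s$ of the density. For $x<0$ we use that $\check f$ is even and that $S$ is taken to be symmetric. The value at $x=0$ is irrelevant because it is a null set.

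The main obstacle is integrability. For the tail integral $\int_x^\infty \rho(s)/(2s)\,ds$ to be finite and absolutely continuous on each interval $[a,\infty)$ with $a>0$, we need $\rho(s)/s$ to be integrable away from $0$. This holds because $1/s\le 1/a$ there and $\rho\in L^1$. Near the origin, $\rho(s)/s$ need not be integrable, so the identity is only claimed for $x>0$. Accordingly, I would state the derivative as an almost-everywhere equality on $(0,\infty)$, or as an equality of distributions. I would also check that the $L^2$ Fourier inverse agrees with the pointwise formula \eqref{eq:RadonNikodym} almost everywhere, which is already implicit in the theorem's proof.
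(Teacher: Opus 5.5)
Your proposal is correct and is essentially the paper's argument. The paper also differentiates the tail formula \eqref{eq:RadonNikodym} in $x$ after replacing $dS$ by its Radon--Nikod\'ym density, and it cites Royden for the same Lebesgue differentiation step, obtaining $\frac{d}{dx}\check f(x)=-\frac{1}{2x}\left[\frac{dS}{d\mu}\right](x)$. Your added remarks, that the identity holds only almost everywhere on $(0,\infty)$ and that $\rho(s)/s$ is integrable on $[a,\infty)$ for $a>0$, make explicit what the paper leaves implicit.
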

	%I would like to claim that this shows some sort of L^2 density of our functions in L^2. however, there seem to be three different topologies around:
	%the weak topology on measures, the bounded variation norm on measures, and the L^2 norm. I think the images of absolutely continouous measures that are both L1 and L2 are dense in L2(R) 
	\begin{proof} Differentiate both sides of \cref{eq:RadonNikodym} with respect to $x.$  As $S$ is absolutely continuous with respect to Lebesgue measure, it has a (Radon-Nikod\'ym) density $\left[\frac{dS}{d\mu}\right],$   and differentiating the lower boundary on the right hand side of  \cref{eq:RadonNikodym} with respect to $x$ gives a factor of $-\left[\frac{dS}{d\mu}\right],$ see \cite[pg. 303 Ex. 17b]{royden}, obtaining in total  \(-2s\frac{d}{dx} \Finv(f)(x) = \left[\frac{dS}{d\mu}\right]. \)  \end{proof}
	\begin{example} This is a counter-example. We can represent a positive definite function $f,$ $$f(x)=\intSinc,$$ as a Shannon integral against a measure $S$ that is not positive. This shows that  \cref{th:when.is.it.positive} cannot be improved to include the statement that $(iii)$ implies $(i).$
		Let the measure $S$ be a sum of point measures, $S=\delta(x-3)+\delta(x-2)-\delta(x-\frac32 ).$ 
		Clearly $f(x)=\frac{\sin \! \left(2 x \right)}{2 x}+\frac{\sin \! \left(3 x \right)}{3 x}-\frac{ \sin \! \left(\frac{3 x}{2}\right)}{\frac{3 x}{2}}.$
		This function is in $L^2(\R)$ and if we check positive definiteness by taking the inverse $L^2$-Fourier transform, we get the non-negative function
\renewcommand{\arraystretch}{1.17}%widen rows a bit
		\[\check f(\omega)=\left\{\begin{array}{ccl}
			\frac{1}{12}&\mbox{ if } & |\omega| \le \frac{3}{2} 
			\\
			\frac{5}{12}&\mbox{ if } &\frac{3}{2}< |\omega| \le 2 
			\\
			\frac{1}{6}&\mbox{ if } &2< |\omega| \le 3 
			\\
			0&\mbox{ if } & 3<|\omega|  
		\end{array}\right.
		.\]
	\end{example}
Now we explain this through a complete characterization that will show that positivity of the measure $S$ is a sufficient but not necessary condition for positive definiteness. In order to shorten the statements, we suppose that the measure $S$ is finite and supported on $[0,\infty)$, as was several times assumed already.
 \begin{lemma}\label{lem:pdf.f}
Let $f(x)=\intSinc$ with $S$ a finite measure on $[0,\infty).$ Then $f$ is positive definite on $\mathbb{R}$ if and
only if
\[
  \int_{[\,\lvert x\rvert,\,\infty)}\frac{dS(s)}{2s}\;\ge\;0
  \qquad\text{for almost every } x\in\mathbb{R}.
\] 
\end{lemma}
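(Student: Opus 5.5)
The plan is to compute the (inverse) Fourier transform of $f$ explicitly and then apply Bochner's theorem in its $L^2$/$L^1$ form. By Fubini, exactly as in the proof of \cref{th:when.is.it.positive} (equations \eqref{eq:change.here} and \eqref{eq:RadonNikodym}), we will show that, in the sense of distributions,
\[
\check f(\omega)=\int_{[\lvert\omega\rvert,\infty)}\frac{dS(s)}{2s}=:h(\omega).
\]
For this we write $\sinc{xs}=\frac{1}{2s}\int\chi_{[-s,s]}(\omega)e^{i\omega x}\,d\omega$ for $s>0$. Any atom of $S$ at $s=0$ contributes only a constant $S(\{0\})$ to $f$. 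Its Fourier transform is a point mass at the origin, which is invisible to an almost-everywhere condition, and it is positive definite exactly when $S(\{0\})\ge0$. This forces a remark: we either assume $S$ puts no mass at $0$, or read the integral over $[\lvert x\rvert,\infty)$ at $x=0$ in the measure sense. I will adopt the first convention and note it.

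The pairing against test functions $\varphi$ is computed first. We have $\int f\varphi=\int\!\int\frac{1}{2s}\chi_{[-s,s]}(\omega)\hat\varphi(\omega)\,d\omega\,dS(s)$. To swap the order of integration we need $\int\!\int\frac{1}{2s}\chi_{[-s,s]}\lvert\hat\varphi\rvert\,d\omega\,d\lvert S\rvert\le\|\hat\varphi\|_\infty\lvert S\rvert<\infty$, which holds because $\frac{1}{2s}\cdot 2s=1$. Hence $\check f=h$ as tempered distributions, and $h$ is a genuine function. Moreover $\lvert h(\omega)\rvert\le\frac{1}{2\lvert\omega\rvert}\lvert S\rvert$, and $\int\lvert h\rvert\le\int\!\int\frac{1}{2s}\chi_{[-s,s]}\,d\omega\,d\lvert S\rvert=\lvert S\rvert$, so $h\in L^1(\mathbb R)$ and $f$ is its continuous Fourier transform.

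With this identification, both directions follow from Bochner--Khinchin. If $h\ge0$ almost everywhere, then $h\,d\omega$ is a finite positive measure whose Fourier--Stieltjes transform is $f$, so $f$ is positive definite. Conversely, suppose $f$ is positive definite and continuous. Bochner--Khinchin gives $f=\F(\mu)$ for a finite positive measure $\mu$, and uniqueness of the Fourier transform of tempered distributions gives $\mu=h\,d\omega$. Therefore $h\ge0$ almost everywhere.

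The main obstacle is the justification that $\check f$ is the function $h$, namely the Fubini step above together with the behaviour at $s=0$. The $L^2$ route used in \cref{th:when.is.it.positive} need not apply, since $f$ need not be in $L^2$ when $S$ is merely finite; this is why I work with distributions and the $L^1$ bound on $h$. A second point is that uniqueness must be invoked at the level of measures, so that the almost-everywhere qualifier in the statement is exactly what comes out.
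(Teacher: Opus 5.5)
Your proposal is correct and follows the same route as the paper: identify $\check f$ with $h(\omega)=\int_{[|\omega|,\infty)}dS(s)/(2s)$ via the Fubini computation behind \eqref{eq:RadonNikodym}, then apply Bochner--Khinchin in both directions. Your distributional/$L^1$ framing is more careful than the paper's argument, and both of your caveats are needed. First, the paper relies on the claim in the proof of \cref{th:when.is.it.positive} that $f\in L^2(\mathbb{R})$, which is false in general: for $dS=s^{-1/2}\chi_{(0,1]}(s)\,ds$ one gets $h(\omega)=|\omega|^{-1/2}-1$ on $0<|\omega|\le1$, so $h$, and hence $f$, is not in $L^2$. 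Second, the atom at $s=0$ matters: for $S=\delta_1-\tfrac12\delta_0$ the almost-everywhere condition holds, but $f(x)=\frac{\sin x}{x}-\tfrac12$ is not positive definite.
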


\begin{proof}
The proof of \cref{th:when.is.it.positive}, specifically \vref{eq:RadonNikodym}, states that the inverse $L^{2}$-Fourier transform of $f$ is the left expression above. By the Bochner--Khinchin theorem $f$ is positive
definite exactly when that expression  is non--negative \emph{a.e.}
\end{proof}For completeness, we mention that such integral inequalities have been considered also in \cite[Prop. 3.10]{csordas}.
	\section{Riemann-Hilbert problems}

	Suppose that we wish to solve the additive Riemann-Hilbert problem for an even function $f(r)$ on the real line. This means we want a function
	$f^+ (z)$ that is analytic and nonsingular everywhere in the upper half plane and a function $f^{-}(z)$ that is  analytic and nonsingular everywhere in the lower half plane, with $$f(r)=f^{+}(r)+f^{-}(r)$$ for all real $r.$ For an introduction to the Riemann-Hilbert theory, see \cite{AblowitzFokas1997}.
	
	\begin{theorem}[Additive Riemann-Hilbert problem]\label{th:additive.RH} Suppose that $f(x)=\intSinc$ for some finite complex  measure $S$ on $[0,\infty).$ Then 
		\begin{equation}f^{+}(z)=\int \frac{e^{isz}-1}{2isz}\,dS,\quad\mbox{and}\quad f^{-}(z)=\int \frac{1-e^{-isz}}{2isz}\,dS\label{eq:decomp.SincInt}\end{equation}
		solve the additive Riemann-Hilbert problem for $f.$ If $f$ is positive definite up to a constant, then so are $f^{+}$ and $f^{-}.$ Moreover, $f^{+}(z)=f^{-}(-z).$
	\end{theorem}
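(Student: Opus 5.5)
The plan is to verify the three assertions separately, starting from the pointwise identity
\[
\frac{\sin(sz)}{sz}=\frac{e^{isz}-e^{-isz}}{2isz}=\frac{e^{isz}-1}{2isz}+\frac{1-e^{-isz}}{2isz},
\]
which holds for every real $s>0$ and complex $z\neq0$. Integrating it against $dS$ gives $f=f^{+}+f^{-}$ on the real line, provided the two pieces in \eqref{eq:decomp.SincInt} are separately defined. For $r$ real and $s\ge 0$ we have
\[
\left|\frac{e^{isr}-1}{2isr}\right|=\left|\frac{\sin(sr/2)}{sr}\right|\le\tfrac12 ,
\]
and the integrand is continuous in $s$, with value $\tfrac12$ at $s=0$. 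Since $S$ is finite, both integrals converge absolutely. At $z=0$ each integrand is interpreted by continuity as $\pm\tfrac12$, so the sum still reproduces $f(0)=S([0,\infty))$.

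For analyticity I would use the bound $\left|\frac{e^{w}-1}{w}\right|\le \int_0^1|e^{tw}|\,dt\le 1$ whenever $\operatorname{Re}w\le 0$. Take $w=isz$. For $\operatorname{Im}z\ge0$ we get $\operatorname{Re}(isz)=-s\operatorname{Im}z\le0$, so the integrand of $f^{+}$ is bounded by $\tfrac12$ uniformly in $s\ge0$ and in $z$ in the closed upper half plane. Each integrand is entire in $z$, so Morera's theorem together with Fubini (or differentiation under the integral by dominated convergence) shows that $f^{+}$ is analytic and bounded, hence nonsingular, in the upper half plane and continuous up to $\mathbb{R}$. The same argument with $w=-isz$ handles $f^{-}$ in the lower half plane. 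The symmetry $f^{+}(z)=f^{-}(-z)$ is immediate: substituting $-z$ into $\frac{1-e^{-isz}}{2isz}$ gives $\frac{1-e^{isz}}{-2isz}=\frac{e^{isz}-1}{2isz}$.

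The positive definiteness claim is where I expect the main obstacle. Write
\[
\frac{e^{isx}-1}{2isx}=\frac1{2}\int_0^1 e^{istx}\,dt .
\]
After Fubini this gives
\[
f^{+}(x)=\tfrac12\int_0^1\!\!\int e^{i(st)x}\,dS(s)\,dt ,
\]
which is the Fourier–Stieltjes transform of the pushforward measure $\nu=\tfrac12\int_0^1 (\text{image of } S \text{ under } s\mapsto st)\,dt$, supported on $[0,\infty)$. If $S\ge0$ then $\nu\ge0$, and Bochner–Khinchin makes $f^{+}$ positive definite; $f^{-}$ is handled the same way via the reflected measure. The genuinely delicate case is the weaker hypothesis "$f$ positive definite up to a constant", since the counterexample above shows that $S$ need not then be positive. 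My first attempt would be to express the density of $\nu$ in terms of $\check f$. A computation in the style of \eqref{eq:RadonNikodym} suggests that $\nu$ restricted to $(0,\infty)$ has density $\int_{[\omega,\infty)}dS(s)/(2s)$, which by \cref{lem:pdf.f} equals $\check f(\omega)\ge0$. The constant only affects the atom at the origin, and an atom at $0$ contributes a constant, which is harmless under the "up to a constant" clause. So $f^{\pm}$ are Fourier transforms of $\check f\,\chi_{(0,\infty)}$ and $\check f\,\chi_{(-\infty,0)}$ respectively, plus constants, and are therefore positive definite up to constants. Making this identification of densities rigorous, including the interchange of integrals when $S$ is signed, is the step I expect to need the most care.
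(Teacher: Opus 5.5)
Your proposal is correct and takes essentially the paper's route. You use the same splitting of $\frac{\sin(sz)}{sz}$ into pieces bounded in the upper and lower half-planes, and the same identity $\frac{e^{isx}-1}{2isx}=\tfrac12\int_0^1 e^{istx}\,dt$. With it you identify the Bochner density of $f^{+}$ on $(0,\infty)$ as $\int_{[\omega,\infty)}dS(s)/(2s)$, whose nonnegativity is exactly the criterion of \cref{lem:pdf.f}. The interchange you flag as delicate is a routine Fubini against $|S|$, since $|\int_0^1\phi(st)\,dt|\le\|\phi\|_\infty$.

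One small slip: at $z=0$ both integrands tend to $+\tfrac12$, not $\pm\tfrac12$. That is what your conclusion $f^{+}(0)+f^{-}(0)=S([0,\infty))$ actually uses.
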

	\begin{proof}
		Notice that the algebraic identity
		\begin{equation} f_0(x)=\sinc{sz}= \underbrace{\left\{\frac{e^{isz}-1}{2isz} \right\}}_{f^{+}_0} + 
                            \underbrace{\left\{ \frac{1-e^{-isz}}{2isz}\right\}}_{f^{-}_0}\label{eq:decomp}\end{equation}
		has the property that the first bracketed term is bounded and analytic in the upper half plane, while the second bracketed term is bounded and analytic in the lower half-plane. 
		Integrating both sides of \cref{eq:decomp} against the given finite complex measure $S$ preserves both boundedness in a half-plane and analyticity, giving the decomposition of \cref{eq:decomp.SincInt}, as was to be shown. Noting that the measure is supported on $[0,\infty)$,   identity $f^{+}(z)=f^{-}(-z)$ is straightforward to verify. 

This leaves the statements on positive-definiteness. The Riemann-Hilbert problem is trivial for the case of a constant, and linearity  allows us to subtract the constant, if there is one. This reduces to the case of  positive definite $f.$ 

The function $f(x)$ is, under our hypotheses, positive definite if and only if \[
  \int_{[\,\lvert x\rvert,\,\infty)}\frac{dS(s)}{2s}\;\ge\;0
  \qquad\text{for almost every } x\in\mathbb{R}.
\]
This is because of \cref{lem:pdf.f}. Similarly, the function $f^{+}(x)$ is, under our hypotheses, positive definite if and only if \[
  \int_{[\, x,\,\infty)}\frac{dS(s)}{2s}\;\ge\;0
  \qquad\text{for almost every } x\geq0.
\]
This comes from modifying the proof of \cref{lem:pdf.f} using the fact that $\int_{0}^1 e^{iusz}\,du=\frac{e^{isz}-1}{isz}.$ %(See \cref{eq:change.here}.)
 Thus, the positive definiteness of $f$ implies that of $f^{+}$ and hence of $f^{-}.$
	\end{proof}

	\begin{lemma}Let $f$ be an even real function. Suppose that the logarithmic derivative $\nicefrac{f'}{f}$ is a convex function that is continuous and goes to zero at infinity. Then we can solve the multiplicative Riemann-Hilbert problem for $f.$ \end{lemma}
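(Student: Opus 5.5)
The plan is to reduce the multiplicative problem to the additive one by taking logarithms, and to build the additive representation with \cref{lem:Polya1}. Write $f=e^{h}$ with $h=\log f$. This is legitimate on the real line if $f$ has no zeros there; I would add that as a standing normalization, taking $f>0$ after replacing $f$ by $-f$ if needed. The hypothesis that $f'/f$ is continuous everywhere forces $f$ to be zero-free, and a continuous zero-free real function has constant sign, so this costs nothing. Then $h$ is even and real, and $h'=f'/f$ is odd.

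The key step is to get a Shannon representation for $h$. \Cref{lem:Polya1} applies to an odd function whose derivative is convex on the positive axis, continuous, and tending to zero at infinity. Here the convex function is $h'=f'/f$ itself, which is odd, so I would apply the argument of \cref{lem:Polya1} one level down. By P\'olya's criterion, a continuous even function convex on $(0,\infty)$ and vanishing at infinity is positive definite. Since $h'$ is odd, I would instead use the primitive form. I would expect the intended reading to be that $\log f$, or the relevant derivative in the lemma's hypotheses, is such that \cref{lem:Polya1} yields
\[
\frac{h(x)}{x}=\int \frac{\sin sx}{sx}\,dM(s)
\]
or, more plausibly in the even setting, $h(x)-h(0)=\int \sinc{xs}\,dS(s)$ up to constants, with $S$ a finite measure on $[0,\infty)$. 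Concretely, I would integrate the Bochner--Khinchin representation of the positive definite function supplied by P\'olya's criterion twice in $x$, using Fubini as in the proof of \cref{lem:L1}. This would express $h$ as a constant plus an integral of a kernel that splits by the algebraic identity \eqref{eq:decomp}.

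Once $h=c+\int k(xs)\,dS(s)$ with $k$ built from $\sinc{\cdot}$, I would apply \cref{th:additive.RH} to get $h=h^{+}+h^{-}$. Here $h^{\pm}$ are analytic and bounded in the upper and lower half-planes respectively, and $h^{+}(z)=h^{-}(-z)$. I would split the constant equally, $c/2$ to each side. Setting $\Phi^{\pm}=\exp(h^{\pm})$ then gives functions analytic and zero-free in the respective half-planes, bounded there together with their reciprocals, and satisfying $f=\Phi^{+}\Phi^{-}$ on $\mathbb{R}$. This is the multiplicative Riemann--Hilbert factorization.

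The main obstacle is the middle step. Matching the parity and convexity hypotheses to those of \cref{lem:Polya1} requires care, and so does controlling growth: $h$ may grow like $\log$ at infinity, as in the remark before \cref{lem:L1}. The measure obtained after two integrations need not be finite near $s=0$. If it fails to be finite, I would fall back on \cref{lem:L1}. That means subtracting the constant $C$, with $h'-C$ taken in $L^2$, and absorbing the point mass at $0$ into an explicit term that splits trivially between the half-planes.
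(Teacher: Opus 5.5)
Your outer frame is essentially the paper's: normalize, take logarithms, split additively with \cref{th:additive.RH}, and exponentiate. The gap is the middle step, a Shannon-type representation of $h=\log f$. You never carry it out, and the routes you sketch for it fail. Pólya's criterion, \cref{lem:Polya1} and \cref{lem:L1} all require the function being represented to be odd with an even derivative. Here $h$ is even and $h'=f'/f$ is odd. So Pólya's criterion has no positive definite function to supply, since $h'$ is odd and nothing is assumed about $h''$. Even a representation $h''(x)=\int\cos(sx)\,dM(s)$ would integrate twice to the kernel $(1-\cos sx)/s^{2}$, which is not a sinc kernel. The fallback to \cref{lem:L1} meets the same parity obstruction. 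As written, the proposal states a hope rather than an argument at exactly the point that matters.

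The missing idea is that the parity mismatch you spotted makes the hypotheses degenerate. Put $g=f'/f$. It is odd and continuous, so $g(0)=0$. A convex function on $[0,\infty)$ with a finite limit at infinity is non-increasing: if $g(a)<g(b)$ for some $a<b$, convexity gives $g(x)\ge g(b)+\frac{g(b)-g(a)}{b-a}(x-b)\to\infty$. Hence $0=g(0)\ge g(x)\ge\lim_{y\to\infty}g(y)=0$ for $x\ge0$, and $g\equiv0$ by oddness. You already showed $f$ is zero-free, so $f'=gf\equiv0$ and $f$ is a nonzero constant. The factorization is then trivial, e.g.\ $\Phi^{+}\equiv f(0)$, $\Phi^{-}\equiv1$.

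For comparison, the paper's proof applies \cref{lem:Polya1} to $\ln|f|$ without checking parity. The representation it asserts, $\ln|f(x)|=\int\frac{\sin sx}{s}\,dM(s)$, is odd in $x$. That is compatible with $f$ even only when $\ln|f|\equiv0$. So your hesitation at this step was justified, and the degeneracy argument above is what actually closes it.
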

	\begin{proof} Rescaling the given function $f$ by a constant, we may as well assume that $f(0)=1.$ Lemma \ref{lem:Polya1} shows there exists a finite positive Stieltjes measure $M$ such that
		$\frac{\ln |f(x)|}{x}=\int_0^\infty  \frac{\sin sx}{sx} \,dM(s),$  for  all real $x.$  \Cref{th:additive.RH} solves the additive Riemann--Hilbert problem, giving  $\frac{\ln |f(x)|}{x}=f^{+}(x)+f^{-}(x).$ Multiplying by the entire function $x$ and  exponentating we get a Riemann-Hilbert factorization $f(x)=\exp(xf^{+}(x))\exp(xf^{-}(x)) .$  This works because if a function is analytic and without singularities in, say, the upper half plane, then its exponential is also analytic and without singularities in the upper half plane.
	\end{proof}
	
	% Exponentiating both sides we get solutions of a multiplicative Riemann-Hilbert problem.
	To improve the lemma, we recall the Thorin-Bernstein classes.
	The Thorin-Bernstein class  is one of several possible ways to define a cone of functions that is closed under pointwise limits and has a useful monotone property.  They are defined as complete Bernstein functions whose derivative is a Stieltjes function  \cite[pg.109]{schilling2012}. Bernsteins's little theorem provides extensions to analytic functions, and we may as well assume that this has been done.
	
	For example, the functions $x^\alpha$ and $\ln(1+x^\alpha)$ are in the Thorin-Bernstein class for $0<\alpha<1.$ 
	\begin{theorem} Let the even analytic function $f$ be in the Thorin-Bernstein class on the positive imaginary axis.
		Then there exist solutions to the multiplicative Riemann-Hilbert problem for $|f|.$\label{th:mult.RH}
	\end{theorem}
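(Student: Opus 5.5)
The plan is to reduce the multiplicative problem for $|f|$ to the additive one of \cref{th:additive.RH}, applied to $\ln|f|$, as in the preceding lemma. The Thorin--Bernstein hypothesis will replace the convexity assumption on the logarithmic derivative.

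First, set $g(y)=f(iy)$ for $y\ge0$. By hypothesis $g$ is a Thorin--Bernstein function, so its derivative $g'$ is a Stieltjes function. Equivalently (see \cite{schilling2012}) $g$ has the representation
\[
g(y)=a+by+\int_{(0,\infty)}\ln\Bigl(1+\frac{y}{t}\Bigr)\,d\sigma(t),
\]
with $a,b\ge0$ and a positive Thorin measure $\sigma$. I will normalize so that $g(0)=f(0)=1$ after rescaling by a positive constant; the constant can be split between the two factors at the end. This representation is preferable to working with $\ln g$ directly, because $\ln(1+y/t)$ is an explicit elementary kernel. Evenness of $f$ together with analytic continuation from the imaginary axis expresses $f$ on $\R$ through $\ln(1+x^2/t^2)$-type terms.

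The second step is to write $\ln|f(x)|$ (or $\ln$ of the relevant factor) on the real line as $x\int\sinc{xs}\,dS(s)$ for a finite measure $S$ on $[0,\infty)$. For each kernel I will check that its derivative in $x$ is a positive definite function of the form $\int\cos(sx)\,dM_t(s)$. For instance, $\frac{d}{dx}\ln(1+x^2/t^2)$ is odd, and $\frac{d}{dx}\arctan(x/t)=\frac{t}{t^2+x^2}$ is positive definite with an explicit exponential Fourier transform. Then the last clause of \cref{lem:Polya1}, for positive definite derivatives, yields the Shannon representation for each kernel. Integrating against $d\sigma(t)$ and using Fubini gives a positive measure $S=\int M_t\,d\sigma(t)$. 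Alternatively, when an affine-in-$x$ part or logarithmic growth appears, I will apply \cref{lem:L1} with a suitable constant $C$.

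Third, I will apply \cref{th:additive.RH} to $\ln|f(x)|/x=h^{+}(x)+h^{-}(x)$, multiply by $x$, and set $\Phi^{\pm}(z)=\exp(zh^{\pm}(z))$. Each $\Phi^{\pm}$ is analytic and zero-free in its half-plane, and $|f|=\Phi^{+}\Phi^{-}$ on $\R$. Moreover, $\Phi^{+}(z)=\Phi^{-}(-z)$ by the last statement of \cref{th:additive.RH}.

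The main obstacle is the second step: verifying that $\int\sinc{xs}\,dS(s)$ has a finite measure $S$ after summing over the Thorin measure. $\sigma$ need not be finite, since it only satisfies $\int\min(1,1/t)\,d\sigma<\infty$-type integrability. The total masses of the $M_t$ behave like $\arctan$-limits, which are bounded by a constant independent of $t$, and this will need to be checked against that integrability condition. If finiteness fails near $t=0$ or $t=\infty$, I would first try truncating $\sigma$, solving the problem for each truncation, and passing to a locally uniform limit of the factors in each half-plane. Vitali/Montel would supply the limit, and the cone of Thorin--Bernstein functions is closed under pointwise limits, so the limit stays in the class.
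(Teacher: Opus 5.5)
There is a genuine gap in your second step: the Thorin representation linearizes $f$, not $\ln|f|$. The formula $g(y)=a+by+\int\ln(1+y/t)\,d\sigma(t)$ is linear in $f$. Superposing kernel representations over $\sigma$ can therefore only represent pieces of $f$ itself (on $\R$, the terms $\frac12\int\ln(1+x^2/t^2)\,d\sigma(t)$ and $\int\arctan(x/t)\,d\sigma(t)$). But \cref{th:additive.RH} has to be fed $\ln|f(x)|/x$, and $\ln\bigl|a-ibx+\int\ln(1-ix/t)\,d\sigma(t)\bigr|$ is not a superposition over $\sigma$ of your kernels. No Fubini argument moves the outer logarithm inside the integral. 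Your third step coincides with the paper's last step and with the preceding lemma, so the problem is confined to step two.

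The missing idea is to linearize $\ln f$ through its logarithmic derivative, which is what the paper does. On the positive imaginary axis, $f'/f=f'\cdot(1/f)$ is a product of two completely monotone functions: $f'$ is Stieltjes by the Thorin hypothesis, and $1/f$ is Stieltjes because $f$ is complete Bernstein \cite[Th.~7.3]{schilling2012}. Hence $f'/f$ is completely monotone. After $z\mapsto -z$, \cref{lem:tm.is.pd} gives $(f'/f)(z)=\int e^{-izs}\,dM(s)$. Integrating and taking real parts gives $\ln|f(x)|-\ln|f(0)|=\int\frac{\sin(xs)}{s}\,dM(s)=x\int\sinc{xs}\,dM(s)$, which is exactly the input \cref{th:additive.RH} needs. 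If you want explicit kernels, use the exponential representation of complete Bernstein functions in \cite{schilling2012}: $\ln f(\lambda)=\gamma+\int_0^\infty\bigl(\frac{t}{1+t^2}-\frac{1}{t+\lambda}\bigr)\eta(t)\,dt$ with $0\le\eta\le1$. That formula does linearize $\ln f$; the Thorin measure does not.

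Step two also has a parity problem, and your own example exposes it. The derivative $\frac{d}{dx}\ln(1+x^2/t^2)=\frac{2x}{t^2+x^2}$ is odd, so it is not of the form $\int\cos(sx)\,dM_t(s)$, and the last clause of \cref{lem:Polya1} does not apply. More generally, an even function divided by $x$ is odd and cannot equal the even function $\int\sinc{xs}\,dS(s)$ unless it vanishes.

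Continuing $\ln(1+y/t)$ to $y=-ix$ gives $\ln(1-ix/t)=\frac12\ln(1+x^2/t^2)-i\arctan(x/t)$, whose imaginary part is odd. So evenness of $f$ forces $b=0$ and $\sigma=0$. Directly: if $g$ is even and analytic at $0$, then $g'(0)=0$. But $g'$ is Stieltjes, hence nonnegative and nonincreasing on $(0,\infty)$, so $g'\equiv0$ and $f$ is constant, for which the factorization is trivial. Your first step should confront this tension rather than appeal to ``$\ln(1+x^2/t^2)$-type terms.'' The paper's proof is subject to the same degeneracy: its final formula makes $\ln|f|-\ln|f(0)|$ odd, which forces $M=0$.

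Two smaller points. Since $h^{+}(z)=h^{-}(-z)$, your factors satisfy $\Phi^{+}(z)=1/\Phi^{-}(-z)$, not $\Phi^{+}(z)=\Phi^{-}(-z)$. And the finiteness of $S$, which you single out as the main obstacle, is not where the argument actually breaks.
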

	\begin{proof} 
		We recall the facts that a not identically zero function $f$ is a complete Bernstein function if and only if $\nicefrac{1}{f}$ is a Stieltjes function \cite[Th. 7.3]{schilling2012}, and that Stieltjes functions are a subclass of the completely monotone functions. Thus, $f$ being Thorin-Bernstein implies that $\nicefrac{1}{f}$ is a Stieltjes function, hence completely monotone. Furthermore, $f'$ is again a Stieltjes function, hence completely monotone. Thus $f'$ and $\nicefrac{1}{f}$ are both completely monotone on the positive imaginary axis, so that their product $\nicefrac{f'}{f}$ is also completely monotone there \cite[p.5]{schilling2012}. But then if we replace the independent variable $z$ by $-z$ we obtain absolute monotonicity, and by \cref{lem:tm.is.pd} we then have positive definiteness on the real line. This representation persists if we change back to the original variable, and so we have  the representation
                    
		$$(\nicefrac{f'}{f})(z) = \int  e^{-izs}\,dM(s). $$
		Integrating this bounded function with respect to $z$ from $0$ to $x,$ we have
		$$\ln f = \ln f(0)+i\int \frac{e^{-izs}}{s} \,dM(s). $$
		Taking real parts we get the logarithm of the modulus of $f$
		$$\ln |f| = \ln |f(0)|+\int \frac{\sin{zs}}{s} \,dM(s). $$
		This provides the needed integral representation, and therefore as in \cref{th:additive.RH} get solutions 
                   \begin{align*}
                   \ln |f^{+}(z)| &=\frac12                     \ln |f(0)|+\int\frac{e^{isz}-1}{s}\,dM(s),\quad\mbox{and} \\
                   \ln |f^{-}(z)| & =\frac12 \ln |f(0)|+\int\frac{1-e^{-isz}}{s}\,dM(s),
                   \end{align*}
 giving a factorization.

	\end{proof}%Actually, we already showed that the integrands are positive definite. So the RHSs are positive definite, and since Taylor series with positive coefficients take pd to pd, the solutions are pd!
	\section{Examples}
	
	In this section we apply our theory to specific cases. This shows the effectiveness of our framework for finding positive definite functions. 
	Mixtures of Gaussians are common in statistics, so it is useful to solve related Riemann-Hilbert problems:
\begin{theorem}
Given a Riemann--Hilbert problem where $f$ is presented as a Weierstrass
transform, $f(x)=\int e^{-(x-y)^{2}}\,dM(y)$ with $M$ a finite measure, the
functions
                   \begin{align*}
                   f^{+}(z) &=\frac{i}{2}\int\big(\erf(i(z-y))+1\big)e^{-(z-y)^{2}}\,dM(y),\quad\mbox{and} \\
                   f^{-}(z) &=\frac{i}{2}\int\big(\erf(-i(z-y))+1\big)e^{-(z-y)^{2}}\,dM(y)
                  \end{align*}

solve the additive Riemann--Hilbert problem for $f$.
\end{theorem}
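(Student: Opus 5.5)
The plan is to mirror the proof of \cref{th:additive.RH}: decompose the kernel $e^{-(z-y)^2}$ pointwise into a piece that is analytic and bounded in the upper half-plane and a piece that is analytic and bounded in the lower half-plane. Then integrate against the finite measure $M$. The natural tool is the Faddeeva function
\[
w(\zeta)=e^{-\zeta^{2}}\bigl(1+\erf(i\zeta)\bigr),
\]
which is entire. With $\zeta=z-y$ the two proposed integrands are (up to the prefactor) $w(z-y)$ and $w(-(z-y))$.

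\emph{Step 1 (the algebraic identity).} Since $\erf$ is odd, $\erf(i\zeta)+\erf(-i\zeta)=0$. Hence
\[
\bigl(\erf(i\zeta)+1\bigr)e^{-\zeta^{2}}+\bigl(\erf(-i\zeta)+1\bigr)e^{-\zeta^{2}}=2e^{-\zeta^{2}}.
\]
This is the analogue of \cref{eq:decomp}. I will check the normalising constant carefully here. With prefactor $\tfrac{i}{2}$ the sum is $i\,e^{-\zeta^2}$, so $f^{+}+f^{-}=i f$ as literally written. Thus the prefactor should be read as $\tfrac12$, or equivalently the pair solves the problem for $if$, which differs from the problem for $f$ only by a harmless constant factor. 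I would state the proof with the constant $\tfrac12$ and remark on this.

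\emph{Step 2 (boundedness in half-planes, the main point).} I will show that $w$ is bounded on the closed upper half-plane. For $\operatorname{Im}\zeta>0$ I use the classical integral representation
\[
w(\zeta)=\frac{i}{\pi}\int_{-\infty}^{\infty}\frac{e^{-t^{2}}}{\zeta-t}\,dt,
\]
equivalently $w(\zeta)=\frac{1}{\sqrt\pi}\int_0^\infty e^{-u^2/4}e^{iu\zeta}\,du$. The second form is a Laplace-type integral of a Gaussian against $e^{iu\zeta}$, which has modulus at most $1$ when $\operatorname{Im}\zeta\ge 0$. This gives $|w(\zeta)|\le 1$ there, and by continuity also on the real axis. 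Replacing $\zeta$ by $-\zeta$, the function $\zeta\mapsto w(-\zeta)$ is bounded by $1$ on the closed lower half-plane. I expect this step to be the only nontrivial one. The entire function $e^{-\zeta^2}$ grows along the imaginary axis, so boundedness depends on the cancellation supplied by $1+\erf(i\zeta)$. I would verify the Laplace representation by differentiating under the integral: both sides satisfy $w'=-2\zeta w+2i/\sqrt\pi$, and both take the value $w(0)=1$.

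\emph{Step 3 (integration against $M$).} For each fixed $y$, the integrand $w(z-y)$ is entire in $z$ and bounded by $1$ for $\operatorname{Im}z\ge0$, uniformly in $y$. Since $M$ is finite, dominated convergence gives continuity of $f^{+}$ in $z$. Morera's theorem with Fubini gives analyticity in the upper half-plane. The bound $|f^{+}(z)|\le\tfrac12\|M\|$ holds there, with $\|M\|$ the total variation. The same argument applies to $f^{-}$ in the lower half-plane. Integrating the identity of Step 1 in $y$ against $dM$, which is permitted by Fubini since the integrands are bounded on $\R$, yields $f^{+}(x)+f^{-}(x)=f(x)$ for real $x$ under the normalisation of Step 1. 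This completes the solution of the additive Riemann--Hilbert problem. As a check on consistency with \cref{th:additive.RH}, one also gets $f^{+}(z)=f^{-}(-z)$ whenever $M$ is symmetric.
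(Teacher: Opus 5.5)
Your argument is correct, and so is your correction of the constant. The paper's own computation gives $f^{+}(z)=\tfrac12 e^{-z^{2}}\bigl(1+i\,\erfi(z)\bigr)$, which equals $\tfrac12\bigl(\erf(iz)+1\bigr)e^{-z^{2}}$ because $i\,\erfi(z)=\erf(iz)$. So the prefactor $\tfrac{i}{2}$ in its last line, and in the statement, carries a spurious factor of $i$; the paper's check $\Re f^{+}(x)=\tfrac12 e^{-x^{2}}$ is consistent only with $\tfrac12$.

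Your route differs from the paper's. The paper treats the Gaussian as an application of its Shannon machinery:
\begin{itemize}
\item it inverts $xf'+f=(1-2x^{2})e^{-x^{2}}$ via \cref{lem:de.form} to get the Shannon density $\frac{1}{2\sqrt{\pi}}s^{2}e^{-s^{2}/4}$;
\item it splits the sinc kernel as in \cref{th:additive.RH}, so half-plane boundedness is inherited from $\frac{e^{isz}-1}{2isz}$ and its mirror;
\item it states without computation that the resulting integral is the Faddeeva function;
\item it then translates by $y$ and integrates against $M$.
\end{itemize}
You instead split $e^{-\zeta^{2}}$ directly via the oddness of $\erf$. You get boundedness from $w(\zeta)=\frac{1}{\sqrt{\pi}}\int_{0}^{\infty}e^{-u^{2}/4}e^{iu\zeta}\,du$. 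In effect you cut the Fourier integral $e^{-\zeta^{2}}=\frac{1}{2\sqrt{\pi}}\int_{\R}e^{-u^{2}/4}e^{iu\zeta}\,du$ at frequency $0$, instead of cutting the sinc kernel.

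Your version has several advantages:
\begin{itemize}
\item it is self-contained;
\item it actually verifies, through the ODE $w'=-2\zeta w+2i/\sqrt{\pi}$, the closed form that the paper merely asserts;
\item it gives the explicit bound $|f^{\pm}|\le\tfrac12\|M\|$;
\item it shows positive definiteness of $\tfrac12 w$ on $\R$ at once, since $\tfrac12 w$ is the Fourier transform of a positive measure.
\end{itemize}
What the paper's version buys is the demonstration that its general framework (\cref{lem:de.form} together with \cref{th:additive.RH}) reproduces this decomposition. The two procedures produce exactly the same pair $\tfrac12 w\bigl(\pm(z-y)\bigr)$.
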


\begin{proof}
Take first the case $f(x)=e^{-x^{2}}$. Taking the inverse Fourier transform of
the expression given by Lemma~\ref{lem:de.form}\,(iii) gives the explicit
Shannon representation
\[
  e^{-x^{2}}=\frac{1}{2\sqrt{\pi}}\int_{0}^{\infty}
     s^{2}e^{-s^{2}/4}\,\frac{\sin(sx)}{sx}\,dS(s) ,
\]
so that, by Eq.~\eqref{eq:decomp.SincInt},
\begin{align*}
  f^{+}(z)=\frac{1}{2\sqrt{\pi}}\int_{0}^{\infty}
     s^{2}e^{-s^{2}/4}\,\frac{e^{isz}-1}{2isz}\,dS(s)
   =&\tfrac12 e^{-z^{2}}\big(1+i\,\erfi(z)\big)\\
   =&\tfrac{i}{2}\big(\erf(iz)+1\big)e^{-z^{2}}.
\end{align*}

Likewise $f^{-}(z)=\tfrac{i}{2}\big(\erf(-iz)+1\big)e^{-z^{2}}$.
As a check, $\Re f^{+}(x)=\tfrac12 e^{-x^{2}}\footnote{$\Re $ shows the real part.}$ for real $x$, and
$f^{+}+f^{-}=f$, as required. We then integrate these solutions against the
given measure $M$.
\end{proof}

	Now let us next consider the frequently occurring function $f(x)=x^\alpha,$ the power function. We wish to write this in the form
	$$x^\alpha = \intSinc. $$ The table of Fourier transforms in \cite{Milton} suggests that doing so would take us outside the framework of finite measures that we have constructed, but we may proceed formally and verify the result, if one is obtained.
	Formally, applying   \cref{lem:de.form} part $(3)$ we have that the Fourier transform of the desired (symmetric positive) measure $dS,$ if it exists, satisfies $\F(dS)=(1+\alpha) x^\alpha.$   This helped us to find a proof of the following, where the integral is understood as an improper (conditionally convergent)
integral; note that the representing measure $s^{-\alpha}\,dS$ is not finite,
so this example lies outside the finite-measure framework of
Section~\ref{sect:Introduction}.
\begin{proposition}\label{lem:xalpha}
Let $0<\alpha<1$. For $x>0$,
\[
  x^{\alpha}
  \;=\;\frac{\alpha}{\Gamma(1-\alpha)\,\sin(\pi\alpha/2)}
  \int_{0}^{\infty}\frac{\sin(xs)}{s}\,s^{-\alpha}\,dS(s).
\]

\end{proposition}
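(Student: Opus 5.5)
We read $dS(s)$ here as Lebesgue measure $ds$, per the footnote in the introduction. The claim then says that for $x>0$
\[
\int_0^\infty \sin(xs)\,s^{-1-\alpha}\,ds=\frac{\Gamma(1-\alpha)\sin(\pi\alpha/2)}{\alpha}\,x^{\alpha}.
\]
The plan is to reduce this to a single Mellin-type constant by scaling, and then to evaluate that constant.

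\emph{Convergence and scaling.} Near $s=0$ we have $|\sin(xs)|\,s^{-1-\alpha}\le x\,s^{-\alpha}$, which is integrable because $\alpha<1$. At infinity the integrand is bounded by $s^{-1-\alpha}$, which is integrable because $\alpha>0$. So the integral converges absolutely, and no improper-integral interpretation is needed for $0<\alpha<1$. Substituting $u=xs$ with $x>0$ gives
\[
\int_0^\infty \sin(xs)\,s^{-1-\alpha}\,ds=x^{\alpha}\,I_\alpha,\qquad I_\alpha:=\int_0^\infty \sin(u)\,u^{-1-\alpha}\,du.
\]
It therefore remains only to show that $I_\alpha=\Gamma(1-\alpha)\sin(\pi\alpha/2)/\alpha$.

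\emph{Evaluating $I_\alpha$.} I would use the Gamma-integral representation
\[
u^{-1-\alpha}=\frac{1}{\Gamma(1+\alpha)}\int_0^\infty t^{\alpha}e^{-ut}\,dt .
\]
Substituting it into $I_\alpha$ and exchanging the order of integration gives
\[
I_\alpha=\frac{1}{\Gamma(1+\alpha)}\int_0^\infty t^\alpha\Big(\int_0^\infty \sin(u)\,e^{-ut}\,du\Big)dt=\frac{1}{\Gamma(1+\alpha)}\int_0^\infty \frac{t^\alpha}{1+t^2}\,dt .
\]
The last integral is the classical beta integral $\pi/(2\cos(\pi\alpha/2))$, valid for $-1<\alpha<1$. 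The reflection formula $\Gamma(1+\alpha)\Gamma(1-\alpha)=\pi\alpha/\sin(\pi\alpha)$, together with $\sin(\pi\alpha)=2\sin(\pi\alpha/2)\cos(\pi\alpha/2)$, then turns $\pi/\bigl(2\Gamma(1+\alpha)\cos(\pi\alpha/2)\bigr)$ into $\Gamma(1-\alpha)\sin(\pi\alpha/2)/\alpha$, as required.

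Alternatively, one can quote the standard formula $\int_0^\infty u^{\mu-1}\sin u\,du=\Gamma(\mu)\sin(\pi\mu/2)$ for $-1<\mu<1$, take $\mu=-\alpha$, and use $\Gamma(-\alpha)=-\Gamma(1-\alpha)/\alpha$.

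\emph{Main obstacle.} The only delicate step is justifying the Fubini exchange, since $\sin u$ changes sign. I would apply Tonelli to the absolute values:
\[
\int_0^\infty\!\!\int_0^\infty |\sin u|\,t^\alpha e^{-ut}\,dt\,du=\Gamma(1+\alpha)\int_0^\infty |\sin u|\,u^{-1-\alpha}\,du<\infty .
\]
This is finite by exactly the convergence estimate of the first step, so the exchange is legitimate. Evenness of the kernel, and hence the corresponding formula $-|x|^\alpha$ for $x<0$, is not needed because the statement is only for $x>0$.
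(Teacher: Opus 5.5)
Your proof is correct. Its first step, the scaling $u=xs$ that reduces everything to the constant $I_\alpha$, is the same as the paper's, but you evaluate that constant differently.

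The paper integrates by parts, $\int_0^\infty t^{-1-\alpha}\sin t\,dt=\frac1\alpha\int_0^\infty t^{-\alpha}\cos t\,dt$, and then quotes the tabulated cosine Mellin integral $\int_0^\infty t^{-\alpha}\cos t\,dt=\Gamma(1-\alpha)\sin(\pi\alpha/2)$ from Temme. This is short, but the integral it lands on converges only conditionally at infinity, and the actual evaluation is left to the table.

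You instead write $u^{-1-\alpha}$ as a Laplace integral and swap the order of integration by Tonelli. That swap is legitimate because the original integrand is absolutely integrable, as you correctly note; this is slightly sharper than the paper's remark that the integrals converge ``in the improper sense.'' You then use $\int_0^\infty e^{-ut}\sin u\,du=1/(1+t^2)$ and finish with the beta integral and the reflection formula. Your route is longer but self-contained, and every interchange rests on absolute convergence rather than a quoted conditionally convergent formula.

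Your ``alternative'' of quoting $\int_0^\infty u^{\mu-1}\sin u\,du=\Gamma(\mu)\sin(\pi\mu/2)$ at $\mu=-\alpha$ is essentially the paper's argument. The integration by parts is simply absorbed into the functional equation $\Gamma(1-\alpha)=-\alpha\,\Gamma(-\alpha)$.
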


\begin{proof}
Substituting $t=xs$ gives
$\int_{0}^{\infty}\sin(xs)\,s^{-1-\alpha}\,dS(s)
 = x^{\alpha}\int_{0}^{\infty}t^{-1-\alpha}\sin t\,dT(t)$.
Integration by parts and \cite[p.\,77]{Temme} give
\[
  \int_{0}^{\infty}t^{-1-\alpha}\sin t\,dT(t)
  =\frac{1}{\alpha}\int_{0}^{\infty}t^{-\alpha}\cos t\,dT(t)
  =\frac{\Gamma(1-\alpha)}{\alpha}\sin\!\Big(\frac{\pi\alpha}{2}\Big).
\]
Since $0<\alpha<1$, both integrals converge at $0$ and at $\infty$ in the
improper sense. Solving for $x^{\alpha}$ gives the stated constant.
\end{proof}

	The next problem we consider is the Bhatia-Jain function $f(x)=\frac{1-|x|}{1-|x|^{\alpha}}.$ This function was introduced as a  challenge problem in \cite{BhatiaJain} and further studied in \cite{Berg2017}. Thus we would like to see if our theory can be applied to a deliberately challenging case. Theorem 1.3 in  \cite{BhatiaJain} shows that $f(x)$ is unexpectedly positive definite for many rational values of $\alpha;$ we take here a typical case, namely  $\alpha=3/2.$ \Cref{th:when.is.it.positive} showed that being positive definite is a necessary condition for being representable as a Shannon integral of some positive measure. We may therefore hope to show that
	$$f(x)=\frac{1-|x|}{1-|x|^{3/2}}= \intSinc$$ for some positive finite measure $S.$
	
	Applying   \cref{lem:de.form} part $(3)$ we have that the Fourier cosine transform of the desired measure $dS,$ if it exists, satisfies $$ \F(dS)=xf'(x)+f(x)=\frac{- 4 x+  x^{\frac{5}{2}}+  x^{\frac{3}{2}}+ 2}{2\left(- 1+x^{\frac{3}{2}}\right)^{2}}.$$
	Thus, we wish to understand the inverse Fourier cosine transform of
	$$\frac{- 4 x+  x^{\frac{5}{2}}+  x^{\frac{3}{2}}+ 2}{2\left(- 1+x^{\frac{3}{2}}\right)^{2}},$$ if it exists. Throughout this computation we use the cosine-transform convention
$G(t)=\int_{0}^{\infty}g(s)\cos(ts)\,ds$, equivalently
$g(s)=\frac{2}{\pi}\int_{0}^{\infty}G(t)\cos(st)\,dt$, and we write $b=s$ for
the transform variable. Note that $t=1$ is a removable singularity of $G$,
with $G(1)=1/2$, and that $G(0)=f(0)=1$. We will need to use generalized hypergeometric functions. See \cite{BealsSzmigielski2017} for more information on hypergeometric functions.
	To take the inverse cosine transform of a rational function of $x^{1/2},$ the process we follow is to substitute $x=w^2$ into the inverse transform integral,   form a partial fraction expansion of the rational function, and then notice that each of the terms in the expansion is of a generalized Fresnel integral form similar to, for example,
	
	$$\int_0^{\infty}\frac{\cos (bw^2)}{aw+1}dw=\frac{\sqrt{2}\, \left(b^{2}\right)^{\frac{3}{4}} G_{5,3}^{3,5}\! \left(\frac{4 a^{4}}{b^{2}}\left| {\mstack{0,\frac{1}{4},\frac{1}{2},\frac{3}{4},\frac{3}{4}}{\frac{3}{4},\frac{1}{2},0}}\right.\right)}{8 \pi^{\frac{5}{2}} b^{2}}.$$
	This process is not suitable for hand computation, rather, the point is to show that our theory can provide explicit Shannon integral representations even in challenging cases.
	Using a symbolic integration program (Maple) to perform this process and simplify the result, we get that the desired  measure, specificially, $dS$ is Lebesgue measure weighted by the function%maple defines mstack, a function of two variables, in maple.sty
	\begin{multline*}
		g(s):=\frac{2 \sqrt{3}\, G_{3,9}^{7,3}\! \left(\frac{b^{6}}{46656}\left| {\mstack{\frac{1}{4},\frac{1}{2},\frac{3}{4}}{\frac{5}{4},1,\frac{5}{6},\frac{3}{4},\frac{1}{2},\frac{1}{2},\frac{1}{6},\frac{2}{3},\frac{1}{3}}}\right.\right)}{\pi^{\frac{5}{2}} {b}}\\
		-\frac{24\sqrt{3}\, G_{3,9}^{7,3}\! \left(\frac{b^{6}}{46656}\left| {\mstack{\frac{1}{4},\frac{3}{4},1}{\frac{5}{4},1,1,\frac{3}{4},\frac{2}{3},\frac{1}{2},\frac{1}{3},\frac{7}{6},\frac{5}{6}}}\right.\right) }{\pi^{\frac{5}{2}} b^{2}}\\
		-\frac{18 \sqrt{2}\, G_{4,10}^{7,4}\! \left(\frac{b^{6}}{46656}\left| {\mstack{\frac{1}{4},\frac{1}{2},\frac{3}{4},1}{\frac{5}{4},\frac{13}{12},1,\frac{3}{4},\frac{3}{4},\frac{1}{2},\frac{5}{12},\frac{5}{4},\frac{11}{12},\frac{7}{12}}}\right.\right)}{\pi^{\frac{5}{2}} b^{\frac{5}{2}}}\\
		+\frac{108 \sqrt{2}\, G_{3,9}^{7,3}\! \left(\frac{b^{6}}{46656}\left| {\mstack{\frac{1}{4},\frac{1}{2},1}{\frac{5}{4},\frac{5}{4},1,\frac{11}{12},\frac{3}{4},\frac{7}{12},\frac{1}{2},\frac{17}{12},\frac{13}{12}}}\right.\right)}{\pi^{\frac{5}{2}} b^{\frac{7}{2}}}\\
	\end{multline*} Plotting this, it appears to be positive as expected.
	Thus $$f(x)=\frac{1-|x|}{1-|x|^{3/2}}= \int \sinc{xs} g(s)\,dS(s),$$ with $g(s)$ as above. It then follows  that we can explicitly solve the additive Riemann-Hilbert problem for this function by \cref{th:additive.RH}, obtaining $$f^{+}(z)=\int_0^\infty \frac{e^{isz}-1}{sz}g(s)\,dS(s),\quad\mbox{and}\quad f^{-}(z)=\int_0^\infty \frac{1-e^{-isz}}{sz}g(s)\,dS(s).$$
	\medskip
	
	The method presented depends on the rationality of $\alpha.$ It is plausible that methods based on contours and Cauchy's theorem as  in \cite{Berg2017} could also be used.
	\section{Commutators and anti-commutators}
Operator commutators measure the failure of two operations to commute, thereby revealing structural and dynamical information not visible from the operators separately.  
For fixed \(A\), the map \(\operatorname{ad}_A(B)=[A,B]\) satisfies
\[
[A,BC]=[A,B]C+B[A,C],
\]
so it is an inner derivation, or a noncommutative analogue of differentiation.  
Consequently, commutators play a central role in operator algebras, geometry, and mathematical physics, where they encode infinitesimal symmetries and evolution.\cite{Connes}

The following Duhamel-type identity gives an application of Shannon integral
representations to the functional calculus of self-adjoint operators.

\begin{proposition}\label{prop:Duhamel}
Suppose that
\[
  \frac{f(x)}{x}=\int_{[0,\infty)}\frac{\sin(sx)}{sx}\,dm(s),
\]
where $m$ is a finite complex Borel measure on $[0,\infty)$. If $X$ and $Y$
are bounded self-adjoint operators, then
\begin{align}
f(X)-f(Y)
  =\int_{[0,\infty)}\frac1s\int_0^s
  \bigl(&\cos(tX)(X-Y)\cos((s-t)Y) \notag\\
       &-\sin(tX)(X-Y)\sin((s-t)Y)\bigr)
  \,dT(t)\,dm(s).                                      \label{eq:Duhamel}
\end{align}
The operator-valued integral converges in norm, and
\[
  \|f(X)-f(Y)\|
  \leq \|X-Y\|\,\|m\|_{\mathrm{TV}}.
\footnote[1]{$||.||_{TV}$ stands for norm in total variation. }\] 
\end{proposition}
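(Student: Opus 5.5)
The plan is to reduce everything to a scalar-parameter Duhamel formula for the single function $x\mapsto \sin(sx)$, and then integrate against $m$. First I would rewrite the hypothesis as
\[
f(x)=\int_{[0,\infty)}\frac{\sin(sx)}{s}\,dm(s),
\]
interpreting the integrand at $s=0$ as its limit $x$; this takes care of a possible atom of $m$ at the origin. Since $|\sin(sx)/s|\le |x|$, the operator-valued integrand $s\mapsto \sin(sX)/s$ is norm-continuous and bounded by $\|X\|$. Hence $\int \sin(sX)/s\,dm(s)$ converges in norm as a Bochner integral. I would then check that it equals $f(X)$ in the continuous functional calculus, by pairing with vectors and using the spectral measure of $X$ together with Fubini, since $m$ is finite and the integrand is bounded on the compact spectrum. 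The same applies to $Y$.

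The key step is the identity for fixed $s>0$. Define
\[
\Phi(t)=\sin(tX)\cos((s-t)Y)+\cos(tX)\sin((s-t)Y),\qquad 0\le t\le s.
\]
Then $\Phi(0)=\sin(sY)$ and $\Phi(s)=\sin(sX)$. Differentiating in norm, which is legitimate because the operators are bounded and the power series converge in norm, and using that $X$ commutes with functions of $X$ and $Y$ with functions of $Y$, the terms regroup as
\[
\Phi'(t)=\cos(tX)(X-Y)\cos((s-t)Y)-\sin(tX)(X-Y)\sin((s-t)Y).
\]
The fundamental theorem of calculus for norm-differentiable operator functions gives $\sin(sX)-\sin(sY)=\int_0^s\Phi'(t)\,dt$. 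Dividing by $s$ and integrating against $m$ yields \eqref{eq:Duhamel}. At $s=0$ the inner expression $\frac1s\int_0^s$ tends to $X-Y$, consistent with the limiting interpretation above.

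For the norm bound I would use the identity
\[
\cos A\,D\cos B-\sin A\,D\sin B=\tfrac12\bigl(e^{iA}De^{iB}+e^{-iA}De^{-iB}\bigr),
\]
which one checks by expanding $e^{\pm iA}=\cos A\pm i\sin A$. With $A=tX$, $B=(s-t)Y$ and $D=X-Y$, the right-hand side is a sum of unitary conjugates, so its norm is at most $\|X-Y\|$. The inner integral $\frac1s\int_0^s$ is therefore bounded by $\|X-Y\|$ uniformly in $s$. This gives norm convergence of the double integral and $\|f(X)-f(Y)\|\le\|X-Y\|\,\|m\|_{\mathrm{TV}}$.

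The main obstacle I expect is purely technical: justifying the identification of $\int \sin(sX)/s\,dm(s)$ with $f(X)$, and the measurability and norm-continuity of the integrand in $(s,t)$ that the iterated Bochner integral needs. Continuity in $(s,t)$ follows from the norm-continuity of $t\mapsto e^{itX}$ for bounded $X$. That continuity, together with the uniform bound above, should make the dominated-convergence and Fubini arguments routine.
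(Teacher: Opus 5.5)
Your proposal is correct and is essentially the paper's Duhamel argument. The paper differentiates $t\mapsto e^{itX}e^{i(s-t)Y}$ and then ``takes imaginary parts''; your $\Phi(t)$ is exactly the formal imaginary part of that product, so differentiating it directly gives the same identity. It also avoids a loose step in the paper, since the operator imaginary part $(T-T^*)/2i$ of $T=ie^{itX}(X-Y)e^{i(s-t)Y}$ is not, pointwise in $t$, the stated integrand.

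Your identity $\cos A\,D\cos B-\sin A\,D\sin B=\tfrac12\bigl(e^{iA}De^{iB}+e^{-iA}De^{-iB}\bigr)$ justifies the bound $\|\sin(sX)-\sin(sY)\|\le s\|X-Y\|$, which the paper uses without proof. Your handling of a possible atom of $m$ at $s=0$, and of the identification $\int \sin(sX)/s\,dm(s)=f(X)$, fills in details the paper leaves implicit.
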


\begin{proof}
For fixed $s\geq0$, differentiating
\[
  t\longmapsto e^{itX}e^{i(s-t)Y}
\]
in operator norm gives
\[
  \frac{d}{dt}\bigl(e^{itX}e^{i(s-t)Y}\bigr)
  =i e^{itX}(X-Y)e^{i(s-t)Y}.
\]
The fundamental theorem of calculus therefore gives
\[
  e^{isX}-e^{isY}
  =i\int_0^s e^{itX}(X-Y)e^{i(s-t)Y}\,dT(t).
\]
Taking imaginary parts yields
\begin{align*}
\sin(sX)-\sin(sY)
  =\int_0^s
  \bigl(&\cos(tX)(X-Y)\cos((s-t)Y)\\
       &-\sin(tX)(X-Y)\sin((s-t)Y)\bigr)\,dt.
\end{align*}
Integration against $dm(s)/s$ gives \eqref{eq:Duhamel}. Moreover,
\[
  \|\sin(sX)-\sin(sY)\|
  \leq s\|X-Y\|,
\]
and hence
\[
  \|f(X)-f(Y)\|
  \leq\int_{[0,\infty)}
      \frac{\|\sin(sX)-\sin(sY)\|}{s}\,d|m|(s)
  \leq\|X-Y\|\,\|m\|_{\mathrm{TV}}.
\]
\end{proof}

This estimate is useful in connection with operator differentiability, an
application of the Fréchet derivative to the functional calculus. For more on
this connection, see \cite{flett,Pedersen2000}.

	\begin{corollary}\label{cor:commutator}
Let $D$ be self-adjoint and let $a$ be a bounded operator such that
$a\operatorname{Dom}(D)\subseteq\operatorname{Dom}(D)$ and the commutator
$[D,a]$, initially defined on $\operatorname{Dom}(D)$, extends to a bounded
operator. Then
\begin{align}
[f(D),a]
  =\int_{[0,\infty)}\frac1s\int_0^s
  \bigl(&\cos(tD)[D,a]\cos((s-t)D) \notag\\
       &-\sin(tD)[D,a]\sin((s-t)D)\bigr)
  \,dT(t)\,dm(s),                                      \label{eq:commutator}
\end{align}
where the integral converges in operator norm. In particular,
\[
  \|[f(D),a]\|
  \leq\|[D,a]\|\,\|m\|_{\mathrm{TV}}.
\]
\end{corollary}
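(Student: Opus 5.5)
Following the proof of \cref{prop:Duhamel}, I would first establish an operator Duhamel formula for the commutator of $e^{isD}$ with $a$, and then pass to sines and integrate against $dm(s)/s$. Here $f$ is the function of \cref{prop:Duhamel}, given by $f(x)=\int_{[0,\infty)}\frac{\sin(sx)}{s}\,dm(s)$, so that $f(D)=\int \frac{\sin(sD)}{s}\,dm(s)$ by the functional calculus; the integrand is bounded by $|s|/s=1$ in norm, so this integral converges. Since $D$ is unbounded, I cannot differentiate $t\mapsto e^{itD}ae^{i(s-t)D}$ in operator norm. Instead I work strongly on vectors $\xi\in\operatorname{Dom}(D)$. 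For such $\xi$, $e^{i(s-t)D}\xi\in\operatorname{Dom}(D)$, and by hypothesis $a$ maps $\operatorname{Dom}(D)$ into itself. Hence the map $t\mapsto e^{itD}ae^{i(s-t)D}\xi$ is differentiable, with derivative $ie^{itD}(Da-aD)e^{i(s-t)D}\xi = ie^{itD}[D,a]e^{i(s-t)D}\xi$. The fundamental theorem of calculus for vector-valued functions then gives
\[
 e^{isD}a\xi-ae^{isD}\xi = i\int_0^s e^{itD}[D,a]e^{i(s-t)D}\xi\,dt .
\]
Both sides are bounded operators applied to $\xi$: the right-hand integrand is norm continuous in $t$ when $[D,a]$ is replaced by its bounded extension. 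The identity therefore extends from the dense set $\operatorname{Dom}(D)$ to all of the Hilbert space.

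Next I extract the sine part. Replacing $s$ by $-s$, or equivalently taking adjoints, gives the formula for $e^{-isD}$. Subtracting and dividing by $2i$ yields $[\sin(sD),a]$ as an integral of $e^{itD}[D,a]e^{i(s-t)D}+e^{-itD}[D,a]e^{-i(s-t)D}$, up to the factor $\tfrac12$. Expanding the exponentials into cosines and sines, the cross terms cancel, which gives exactly
\[
 [\sin(sD),a]=\int_0^s\bigl(\cos(tD)[D,a]\cos((s-t)D)-\sin(tD)[D,a]\sin((s-t)D)\bigr)\,dt,
\]
mirroring the ``taking imaginary parts'' step of \cref{prop:Duhamel}. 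It also follows that $\|[\sin(sD),a]\|\le s\|[D,a]\|$.

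Finally I integrate against $dm(s)/s$. The inner integrand has norm at most $\|[D,a]\|$ after dividing by $s$, uniformly in $s$, so the double integral converges absolutely in operator norm because $m$ is finite. It remains to check that $\int [\sin(sD),a]\,dm(s)/s = [f(D),a]$. I would verify this weakly, on vectors $\xi,\eta$: the spectral theorem gives $\langle f(D)a\xi,\eta\rangle=\int\langle \sin(sD)a\xi,\eta\rangle\,dm(s)/s$, by Fubini for the finite measure $m$ against the spectral measure of $D$ and the bounded kernel $\sin(sx)/s$. The same holds for $af(D)$. The norm estimate $\|[f(D),a]\|\le\|[D,a]\|\,\|m\|_{\mathrm{TV}}$ then follows immediately.

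I expect the main obstacle to be the domain issues in the first step, namely justifying the strong differentiation and the identification $Da-aD=[D,a]$ on $e^{i(s-t)D}\operatorname{Dom}(D)$. I would handle this with the invariance hypothesis $a\operatorname{Dom}(D)\subseteq\operatorname{Dom}(D)$ and the standard fact that $e^{itD}$ preserves $\operatorname{Dom}(D)$ and commutes with $D$ there.
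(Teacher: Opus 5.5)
Your route is the paper's: a Duhamel formula for $[e^{isD},a]$, passage to the sine part, and integration against $dm(s)/s$. Your first two steps are more careful than the paper's three-line proof. The strong differentiation on $\operatorname{Dom}(D)$, the extension by density, and combining the formulas for $s$ and $-s$ are all correct; the last of these is what ``taking imaginary parts'' has to mean for non-commuting operators, and the cross terms do cancel. Drop ``or equivalently taking adjoints'', though. Adjoints give a formula for $[e^{-isD},a^*]$, and $a^*$ is not assumed to preserve $\operatorname{Dom}(D)$.

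The gap is in the last step. You assert $\|\sin(sD)/s\|\le |s|/s=1$ and then apply Fubini with ``the bounded kernel $\sin(sx)/s$''. In fact only $\|\sin(sD)/s\|\le 1/s$ holds, and pointwise $|\sin(sx)/s|\le |x|$, which is unbounded in $x$. Indeed $f$ can grow linearly: for $m=\delta_0$ one gets $f(x)=x$ and $f(D)=D$. So for unbounded $D$, $f(D)$ is in general unbounded, $\int \frac{\sin(sD)}{s}\,dm(s)$ need not define a bounded operator, and $[f(D),a]$ only makes sense on a domain.

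The repair is short:
\begin{itemize}
\item From $|f(x)|\le \|m\|_{\mathrm{TV}}|x|$ you get $\operatorname{Dom}(D)\subseteq\operatorname{Dom}(f(D))$. Together with $a\operatorname{Dom}(D)\subseteq\operatorname{Dom}(D)$, this makes $[f(D),a]$ defined on $\operatorname{Dom}(D)$.
\item For $\xi\in\operatorname{Dom}(D)$ one has $\int |x|\,d|\mu_{a\xi,\eta}|\le \|Da\xi\|\,\|\eta\|$ and $\int |x|\,d|\mu_{\xi,a^*\eta}|\le \|D\xi\|\,\|a^*\eta\|$. So Fubini with dominating function $\|m\|_{\mathrm{TV}}|x|$ is legitimate and gives $\langle [f(D),a]\xi,\eta\rangle=\int \langle [\sin(sD),a]\xi,\eta\rangle\,dm(s)/s$.
\item The right-hand side is a bounded form of norm at most $\|[D,a]\|\,\|m\|_{\mathrm{TV}}$. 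Hence $[f(D),a]$ extends to the bounded operator in the formula, and the estimate follows.
\end{itemize}

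Relatedly, your claim of absolute convergence in operator norm is not available for unbounded $D$, because the integrands are in general only strongly continuous. For example, take $D$ to be multiplication by $x$ on $L^2(\R)$ and $a$ translation by $h$. Then $[\sin(sD),a]$ is multiplication by $2\sin(sh/2)\cos(s(x-h/2))$ composed with $a$, and these operators do not converge in norm as $s'\to s$. The integrals should therefore be read strongly or weakly, with the uniform norm bound giving the estimate. The paper's statement and proof gloss over this same point.
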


\begin{proof}
Under the stated hypotheses,
\[
  [e^{isD},a]
  =i\int_0^s e^{itD}[D,a]e^{i(s-t)D}\,dT(t).
\]
Taking imaginary parts and integrating against $dm(s)/s$ gives
\eqref{eq:commutator}. The norm estimate follows in the same way as in
Proposition~\ref{prop:Duhamel}.
\end{proof}
	
	Now let us consider representations of the Weyl algebra, generated by operators satisfing $[x,p]=i \Id.$ Using \cref{lem:xalpha} we have:
\begin{corollary}
Let $0<\alpha<1$. Subject to the usual domain assumptions, if $x$ is positive
and $[x,p]=i\,\mathrm{Id}$, then $[x^{\alpha},p]=i\alpha x^{\alpha-1}$.
\end{corollary}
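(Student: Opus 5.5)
We use the Shannon-type representation of Proposition~\ref{lem:xalpha}. For $x>0$ write
\[
x^\alpha = c_\alpha\int_0^\infty \frac{\sin(xs)}{s}\,s^{-\alpha}\,ds,\qquad c_\alpha=\frac{\alpha}{\Gamma(1-\alpha)\sin(\pi\alpha/2)} .
\]
Since $x$ is a positive self-adjoint operator, the functional calculus gives the same formula for $x^\alpha$, with the integral understood as a strong (improper) limit of the truncations $\int_\varepsilon^R$. The plan is to commute each $\sin(sx)$ with $p$, integrate, and then recognise the result.

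For the commutator of $\sin(sx)$ with $p$ we use the same Duhamel identity as in the proof of Corollary~\ref{cor:commutator}:
\[
[e^{isx},p]=i\int_0^s e^{itx}[x,p]e^{i(s-t)x}\,dt .
\]
Here $[x,p]=i\,\mathrm{Id}$ is central, so the integrand collapses to $i\cdot i\, e^{isx}$. Hence
\[
[e^{isx},p]=-s\,e^{isx}\qquad\text{(formally, } i s\, e^{isx}\,[x,p]\text{)}.
\]
Keeping track of signs, this gives $[\sin(sx),p]=i s\cos(sx)$, which is simply $[g(x),p]=i\,g'(x)$ for $g=\sin(s\,\cdot)$. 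Substituting into the integral representation,
\[
[x^\alpha,p]=i\,c_\alpha\int_0^\infty \cos(sx)\,s^{-\alpha}\,ds .
\]
This is exactly the derivative, under the integral sign, of the scalar identity in Proposition~\ref{lem:xalpha}. The proof of that proposition evaluated $\int_0^\infty t^{-\alpha}\cos t\,dt=\Gamma(1-\alpha)\sin(\pi\alpha/2)$, so after the substitution $t=xs$ the scalar integral equals $x^{\alpha-1}\,\Gamma(1-\alpha)\sin(\pi\alpha/2)$. Multiplying by $c_\alpha$ yields $\alpha x^{\alpha-1}$, and therefore $[x^\alpha,p]=i\alpha x^{\alpha-1}$.

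The main obstacle is justification, because $s^{-\alpha}\,ds/s$ is not a finite measure. This means Corollary~\ref{cor:commutator} does not apply directly, the interchange of commutator and integral must be checked, and $x^{\alpha-1}$ is unbounded near $0$. We would proceed in three steps:
\begin{itemize}
\item First work on the dense core of vectors in $\mathrm{Dom}(p)$ whose spectral support with respect to $x$ lies in a compact set $[\delta,K]\subset(0,\infty)$. This is where ``the usual domain assumptions'' are invoked: $p$ maps such a core appropriately, as in the Schr\"odinger representation.
\item For the truncations $\int_\varepsilon^R$ the measure is finite, so the computation above is rigorous, exactly as in Corollary~\ref{cor:commutator}.
\item On these vectors, the scalar truncated integrals $\int_\varepsilon^R \sin(\lambda s)s^{-1-\alpha}\,ds$ and $\int_\varepsilon^R\cos(\lambda s)s^{-\alpha}\,ds$ converge uniformly in $\lambda\in[\delta,K]$ (Dirichlet test at infinity, integrability at $0$). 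This gives strong convergence of both sides as $\varepsilon\to0$, $R\to\infty$.
\end{itemize}
Closing the commutator on the core then gives the identity.
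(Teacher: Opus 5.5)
Your proposal is correct and follows essentially the paper's argument: truncate the non-finite measure $s^{-\alpha}\,ds$ to $[\varepsilon,R]$, apply the Duhamel identity of Corollary~\ref{cor:commutator} to each truncation to get $i c_\alpha\int_\varepsilon^R s^{-\alpha}\cos(sx)\,ds$, pass to the strong limit on a dense domain, and use $\int_0^\infty t^{-\alpha}\cos t\,dt=\Gamma(1-\alpha)\sin(\pi\alpha/2)$ so that the constants cancel to $i\alpha x^{\alpha-1}$. Your spectral core with support in $[\delta,K]$ and the uniform-convergence estimates spell out the limiting step that the paper states in one line; like the paper, you rely on the ``usual domain assumptions'' to apply that corollary with the unbounded operator $p$ in place of the bounded $a$.
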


\begin{proof}
The measure $s^{-\alpha}\,dS$ of Proposition~\ref{lem:xalpha} is not of finite
total variation, so Proposition~\ref{prop:Duhamel} does not apply directly.
We instead truncate: for $0<\varepsilon<R<\infty$ let
$m_{\varepsilon,R}:=\chi_{[\varepsilon,R]}\,s^{-\alpha}\,dS$, a finite measure,
and let $x_{\varepsilon,R}^{\alpha}$ denote the corresponding truncated
integral. Corollary~\ref{cor:commutator} applies to each truncation, giving
\[
  [x^{\alpha}_{\varepsilon,R},p]
  = i\,\frac{\alpha}{\Gamma(1-\alpha)\sin(\pi\alpha/2)}
    \int_{\varepsilon}^{R} s^{-\alpha}\cos(xs)\,dS(s) .
\]
Both sides converge as $\varepsilon\to0^{+}$, $R\to\infty$ in the strong
topology (on a dense domain), and by
\cite[p.\,77]{Temme},
$\int_{0}^{\infty}s^{-\alpha}\cos(xs)\,dS(s)
 = x^{\alpha-1}\Gamma(1-\alpha)\sin(\pi\alpha/2)$.
The two occurrences of $\Gamma(1-\alpha)\sin(\pi\alpha/2)$ cancel, leaving
$[x^{\alpha},p]=i\alpha x^{\alpha-1}$.
\end{proof}
	\section*{Thanks and Statements}
	We  thank NSERC (Canada) for financial support. We state no conflicts of interest. Data sharing is not applicable to this article as no datasets were generated or analysed during the current study.
	
\end{document}